\DeclareMathOperator*{\esssup}{ess\,sup}
\newtheorem{theorem}{Theorem}
\newtheorem{lemma}[theorem]{Lemma}
\newtheorem{proposition}[theorem]{Proposition}
\newtheorem{corollary}[theorem]{Corollary}
\newtheorem{definition}[theorem]{Definition}
\newtheorem{remark}[theorem]{Remark}
\theoremstyle{plain}
\begin{document}

\title[The Riemann-Liouville fractional integral in Bochner-Lebesgue spaces III]{The Riemann-Liouville fractional integral in Bochner-Lebesgue spaces III}


\author[P. M. Carvalho-Neto]{Paulo M. de Carvalho-Neto}
\address[Paulo M. de Carvalho Neto]{Departamento de Matem\'atica, Centro de Ciências Físicas e Matemáticas, Universidade Federal de Santa Catarina, Florian\'{o}polis - SC, Brazil}
\email[]{paulo.carvalho@ufsc.br}
\author[R. Fehlberg J\'{u}nior]{Renato Fehlberg J\'{u}nior}
\address[Renato Fehlberg Junior]{Departamento de Matem\'atica, Universidade Federal do Esp\'{i}rito Santo, Vit\'{o}ria - ES, Brazil}
\email[]{renato.fehlberg@ufes.br}


\subjclass[2010]{26A33, 47G10, 30H35}


\keywords{Riemann-Liouville fractional integral, BMO space,  Hardy-Littlewood Theorem, Bochner-Lebesgue space}


\begin{abstract}
In this manuscript, we examine the continuity properties of the Riemann-Liouville fractional integral for order $\alpha = 1/p$, where $p > 1$, mapping from $L^p(t_0, t_1; X)$ to the Banach space $BMO(t_0, t_1; X)\cap K_{(p-1)/p}(t_0, t_1; X)$. This improvement, in some sense, refines a result by Hardy-Littlewood (\cite{HaLi1}). To achieve this, we study properties between spaces $BMO(t_0, t_1; X)$ and $K_{(p-1)/p}(t_0, t_1; X)$. Additionally, we obtained the boundedness of the fractional integral of order $\alpha \geq 1$ from $L^1(t_0, t_1; X)$ into the Riemann-Liouville fractional Sobolev space $W^{s,p}_{RL}(t_0, t_1; X)$.
\end{abstract}

\maketitle

\section{Introduction}

Historically, Hardy and Littlewood (see \cite{HaLi1} for details) were among the first to initiate discussions regarding the boundedness of the Riemann-Liouville fractional integral of order $\alpha > 0$ on the Lebesgue spaces $L^p(t_0, t_1)$. They managed to obtain several interesting results that laid the foundation for this theory in Harmonic Analysis.

In recent years, the authors have begun to study the Riemann-Liouville fractional integral of order $\alpha > 0$ as an operator over the Bochner-Lebesgue spaces (see \cite{CarFe0,CarFe1}). This exploration was motivated by the existing gaps in the literature, which sometimes posed obstacles in other studies involving the theory of fractional calculus. In the author's previous work \cite{CarFe1}, they studied the Riemann-Liouville fractional integral of order $\alpha\in(0,1/p)$, with $p>1$, from $L^p(t_0,t_1;X)$ into $L^{p/(1-p\alpha)}(t_0,t_1;X)$. However, the critical case, where $p=1/\alpha$, remained unexplored in that work. Initially, one might reasonably expect the Riemann-Liouville integral of order $1/p$ maps $L^p(t_0,t_1;X)$ into $L^\infty(t_0,t_1;X)$. However, this expectation does not align with reality. While Hardy-Littlewood previously addressed this issue when $X=\mathbb{R}$, showing that the Riemann-Liouville fractional integral of order $p=1/\alpha$ from $L^p(t_0,t_1)$ into $L^{p}(t_0,t_1)$, but not maps to $L^\infty(t_0,t_1)$, their discussion lacked a complete answer, that we can summarize as: what is the space in the intersection of the $L^{p}(t_0,t_1)$ spaces in which fractional integral of order $p=1/\alpha$ would map $L^{p}(t_0,t_1)$ continuously? This is why we are now motivated to delve deeper into this matter. In this work, we establish that the Riemann-Liouville fractional integral of order $1/p$ is a bounded operator when considered from $L^p(t_0,t_1;X)$ into $BMO(t_0,t_1;X)\cap K_\gamma(t_0,t_1;X)$, for $\gamma\geq1/p^\prime$. It is worth mentioning that there are older partial references on this subject, but they lack completeness (see Subsection \ref{subboa}).

The other scenario, referred to here as the ``Regular Case'', delves into the regularity of the Riemann-Liouville fractional integral of order $\alpha \geq 1$ from $L^1(t_0, t_1; X)$ into the RL fractional Sobolev space denoted by $W^{s,p}_{RL}(t_0, t_1; X)$. This case is obtained as a consequence of the author's previous work \cite{CarFe0,CarFe1}. It is important to emphasize that the concept of non-standard Sobolev spaces has been previously explored in the literature, particularly when $X = \mathbb{R}$; see, for instance, \cite{BeLeNaTo1, CaCo1}.

To summarize the article's organization, Section 2 introduces classical concepts related to Riemann-Liouville fractional integration and differentiation of order $\alpha > 0$. In Section 3, we provid a description of $W^{s,p}_{RL}(t_0, t_1; X)$, along with a proof for the ``Regular Case''. Section 4, we describe the spaces $BMO(t_0,t_1;X)$ and $K_\gamma(t_0,t_1;X)$, given some relation between this spaces. Section $5$, we study the ``Critical Case'' that discusses the critical aspect of the problem as previously explained. In Section 6 we address some open questions.

\section{On the Riemann-Liouville Fractional Integration}

Throughout this manuscript, we assume that $t_0 < t_1$ are fixed real numbers, and $X$ represents a Banach space. To ensure the completeness of this work, we revisit the definitions and notations of some specific classical vector spaces and operators. We begin presenting the classical Bochner-Lebesgue spaces (for further details, see \cite{ArBaHiNe1}).

\begin{definition} If $1\leq p\leq\infty$, we represent the set of all Bochner measurable functions $f:[t_0,t_1]\rightarrow X$, for which $\|f(t)\|_X\in L^{p}(t_0,t_1;\mathbb{R})$, by the symbol $L^{p}(t_0,t_1;{X})$. Moreover, $L^{p}(t_0,t_1;{X})$ is a Banach space when considered with the norm
$$\|f\|_{L^p(t_0,t_1;X)}:=\left\{\begin{array}{ll}\bigg[\displaystyle\int_a^b{\|f(s)\|^p_X}\,ds\bigg]^{1/p},&\textrm{ if }p\in[1,\infty),\vspace*{0.3cm}\\
\esssup_{s\in [a,b]}\|f(s)\|_X,&\textrm{ if }p=\infty.\end{array}\right.$$
We use the symbol $L_{loc}^p(t_0, t_1; X)$, for $1\leq p\leq\infty$, to denote the set of functions that belong to $L^p(K; X)$ for any compact $K\subset (t_0, t_1)$.
\end{definition}

We also recall the Bochner-Sobolev spaces (for more details see \cite{CaHa1}).

\begin{definition} Let $k\in\mathbb{N}^*:=\{1,2,\ldots\}$ and $p\geq1$. The set $W^{k,p}(t_0,t_1;X)$ denotes the subspace of $L^{p}(t_0,t_1;X)$ of every function $f:[t_0,t_1]\rightarrow X$ that has $k-$weak derivatives in $L^{p}(t_0,t_1;X)$. By considering the norm
$$\|f\|_{W^{k,p}(t_0,t_1;X)}:=\sum_{j=0}^k\big\|f^{(j)}\big\|_{L^{p}(t_0,t_1;X)}$$
the set $W^{k,p}(t_0,t_1;X)$ becomes a Banach space. For the completeness of the definition we assume that $W^{0,p}(t_0,t_1;X)=L^p(t_0,t_1;X)$.
\end{definition}

Now we present the notions of Riemann-Liouville fractional integral and derivative.

\begin{definition} Let $\alpha\in(0,\infty)$ and $f:[t_0,t_1]\rightarrow{X}$. The Riemann-Liouville (RL for short) fractional integral of order $\alpha$ at $t_0$ of function $f$ is defined by
\begin{equation}\label{fracinit}J_{t_0,t}^\alpha f(t):=\dfrac{1}{\Gamma(\alpha)}\displaystyle\int_{t_0}^{t}{(t-s)^{\alpha-1}f(s)}\,ds,\end{equation}
for every $t\in [t_0,t_1]$ such that integral \eqref{fracinit} exists. Above $\Gamma$ denotes the classical Euler's gamma function.
\end{definition}

\begin{remark}\label{remarkfracriint} We emphasize the following properties of the RL fractional integral:
\begin{itemize}
\item[(i)] For $\alpha>0$ and $1\leq p\leq\infty$, it holds that
\begin{equation*}\left\|J_{t_0,t}^\alpha f(t)\right\|_{L^{p}(t_0,t_1;X)}\leq \left[\dfrac{(t_1-t_0)^\alpha}{\Gamma(\alpha+1)}\right] \|f\|_{L^p(t_0,t_1;X)}.\end{equation*}
In other words, $J_{t_0,t}^\alpha$ is a bounded linear operator from $L^p(t_0,t_1;X)$ into itself. For details see \cite[Theorem 3.1]{CarFe0}.\vspace*{0.2cm}
\item[(ii)] For $1\leq p<\infty$ and $f\in L^p(t_0,t_1;X)$, we have that
\begin{equation*}\lim_{\alpha\rightarrow0^+}{\big\|J_{t_0,t}^\alpha f-f\big\|_{L^p(t_0,t_1;X)}}=0.\end{equation*}
This is the reason we define $J_{t_0,t}^0 f(t):=f(t)$. For details see \cite[Theorem 3.10]{CarFe0}.\vspace*{0.2cm}
\item[(iii)] For $1\leq p<\infty$ we have that $\{J_{t_0,t}^\alpha:\alpha\geq0\}\subset\mathcal{L}(L^p(t_0,t_1;X))$ defines a $C_0$-semigroup. 
    %
    %
    For details see \cite[Theorem 3.15]{CarFe0}.
\end{itemize}
\end{remark}

\begin{definition} Assume that $\alpha\in(0,\infty)$ and consider $f:[t_0,t_1]\rightarrow{X}$. The Riemann-Liouville (RL for short) fractional derivative of order $\alpha$ at $t_0$ of function $f$ is defined by
\begin{equation}\label{fracinit}D_{t_0,t}^\alpha f(t):=\dfrac{d^{[\alpha]}}{dt^{[\alpha]}}\left[J_{t_0,t}^{[\alpha]-\alpha} f(t)\right]=\dfrac{d^{[\alpha]}}{dt^{[\alpha]}}\left[\dfrac{1}{\Gamma([\alpha]-\alpha)}\displaystyle\int_{t_0}^{t}{(t-s)^{[\alpha]-\alpha-1}f(s)}\,ds\right],\end{equation}
for every $t\in [t_0,t_1]$ such that the right side of \eqref{fracinit} exists. Above the derivative is considered in the weak sense and the symbol $[\cdot]$ is used to denote the least integer greater than $\alpha$.
\end{definition}

\begin{remark}\label{remarkparaajud}
\begin{itemize}
\item[(i)] If $1\leq p<\infty$, $k\in\mathbb{N}$ and $f\in W^{k,p}(t_0,t_1;X)$, follows from the definition of RL fractional derivative and item $(ii)$ of Remark \ref{remarkfracriint} that
$$D_{t_0,t}^kf(t)=\dfrac{d^{k}}{dt^{k}}\left[J_{t_0,t}^{0} f(t)\right]=f^{(k)}(t),$$
for a.e. $t\in[t_0,t_1]$.\vspace*{0.2cm}
\item[(ii)] By item $(iii)$ of Remark \ref{remarkfracriint}, if $0<\beta\leq\alpha$ and $f\in L^1(t_0,t_1;X)$ then
$$D_{t_0,t}^\beta \left[J_{t_0,t}^\alpha f(t)\right]=J_{t_0,t}^{\alpha-\beta}f(t),$$
for a.e. $t\in[t_0,t_1]$.
\end{itemize}
\end{remark}

\section{The Boundedness of RL Fractional Integral - Regular Case}

In this short section we improve the result cited in Remark \ref{remarkfracriint}. For this purpose, we introduce the space $W_{RL}^{\alpha,p}(t_0,t_1;X)$.

 \begin{definition}\label{sobolevriemann} Let $\alpha>0$ and $p\geq1$. The set $W_{RL}^{\alpha,p}(t_0,t_1;X)$ denotes the subspace of $W^{[\alpha]-1,p}(t_0,t_1;X)$ which is composed of every function $f:[t_0,t_1]\rightarrow X$ such that $D_{t_0,t}^\alpha f\in L^{p}(t_0,t_1;X)$.
\end{definition}

\begin{proposition} By considering the norm
\begin{equation*}\|f\|_{W_{RL}^{\alpha,p}(t_0,t_1;X)}:=\|f\|_{W^{[\alpha]-1,p}(t_0,t_1;X)}+\|D_{t_0,t}^\alpha f\|_{L^p(t_0,t_1;X)},\end{equation*}
the set $W_{RL}^{\alpha,p}(t_0,t_1;X)$ becomes a Banach space. We call $W_{RL}^{\alpha,p}(t_0,t_1;X)$ the RL fractional Sobolev spaces.
\end{proposition}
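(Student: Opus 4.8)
The plan is to note that the only real content of the statement is completeness, as the vector space structure and the norm axioms are inherited. Indeed, linearity of $D_{t_0,t}^\alpha$ together with the fact that $W^{[\alpha]-1,p}(t_0,t_1;X)$ is a vector space shows that $W_{RL}^{\alpha,p}(t_0,t_1;X)$ is a subspace; and since $\|\cdot\|_{W^{[\alpha]-1,p}(t_0,t_1;X)}$ is already a norm while $\|D_{t_0,t}^\alpha\cdot\|_{L^p(t_0,t_1;X)}$ is a seminorm, their sum is a genuine norm, the positivity forcing $f=0$ coming from the first summand alone. Thus the work is to verify that every Cauchy sequence converges.

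So let $(f_n)$ be Cauchy in $W_{RL}^{\alpha,p}(t_0,t_1;X)$. From the two estimates
$$\|f_n-f_m\|_{W^{[\alpha]-1,p}(t_0,t_1;X)}\leq\|f_n-f_m\|_{W_{RL}^{\alpha,p}(t_0,t_1;X)},\qquad \|D_{t_0,t}^\alpha f_n-D_{t_0,t}^\alpha f_m\|_{L^p(t_0,t_1;X)}\leq\|f_n-f_m\|_{W_{RL}^{\alpha,p}(t_0,t_1;X)},$$
I would extract that $(f_n)$ is Cauchy in the Banach space $W^{[\alpha]-1,p}(t_0,t_1;X)$ and that $(D_{t_0,t}^\alpha f_n)$ is Cauchy in the Banach space $L^p(t_0,t_1;X)$. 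Completeness of each then furnishes $f\in W^{[\alpha]-1,p}(t_0,t_1;X)$ with $f_n\to f$ there, and $g\in L^p(t_0,t_1;X)$ with $D_{t_0,t}^\alpha f_n\to g$ in $L^p(t_0,t_1;X)$. It remains only to show $f\in W_{RL}^{\alpha,p}(t_0,t_1;X)$ with $D_{t_0,t}^\alpha f=g$, after which convergence in the $W_{RL}^{\alpha,p}$-norm is immediate.

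The crux — and the step I expect to be the main obstacle — is exactly this identification of $g$ as $D_{t_0,t}^\alpha f$, that is, establishing that the fractional derivative acts as a closed operator. Since convergence in $W^{[\alpha]-1,p}(t_0,t_1;X)$ entails convergence in $L^p(t_0,t_1;X)$, the boundedness of $J_{t_0,t}^{[\alpha]-\alpha}$ on $L^p(t_0,t_1;X)$ (Remark \ref{remarkfracriint}(i), applicable since $[\alpha]-\alpha>0$ because $[\alpha]$ is the least integer \emph{strictly} greater than $\alpha$) yields $J_{t_0,t}^{[\alpha]-\alpha}f_n\to J_{t_0,t}^{[\alpha]-\alpha}f$ in $L^p(t_0,t_1;X)$. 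By definition $D_{t_0,t}^\alpha f_n$ is the $[\alpha]$-th weak derivative of $J_{t_0,t}^{[\alpha]-\alpha}f_n$, and these converge to $g$ in $L^p(t_0,t_1;X)$. I would then invoke closedness of the weak derivative: for arbitrary $\varphi\in C_c^\infty(t_0,t_1)$ one has $\int_{t_0}^{t_1}J_{t_0,t}^{[\alpha]-\alpha}f_n(s)\,\varphi^{([\alpha])}(s)\,ds=(-1)^{[\alpha]}\int_{t_0}^{t_1}D_{t_0,t}^\alpha f_n(s)\,\varphi(s)\,ds$, and passing to the limit — legitimate because $\varphi$ and $\varphi^{([\alpha])}$ are bounded with compact support while both sequences converge in $L^p(t_0,t_1;X)$ — gives
$$\int_{t_0}^{t_1}J_{t_0,t}^{[\alpha]-\alpha}f(s)\,\varphi^{([\alpha])}(s)\,ds=(-1)^{[\alpha]}\int_{t_0}^{t_1}g(s)\,\varphi(s)\,ds.$$
Hence $g$ is the $[\alpha]$-th weak derivative of $J_{t_0,t}^{[\alpha]-\alpha}f$, i.e.\ $D_{t_0,t}^\alpha f=g\in L^p(t_0,t_1;X)$, so $f\in W_{RL}^{\alpha,p}(t_0,t_1;X)$. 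Combining $f_n\to f$ in $W^{[\alpha]-1,p}(t_0,t_1;X)$ with $D_{t_0,t}^\alpha f_n\to D_{t_0,t}^\alpha f$ in $L^p(t_0,t_1;X)$ then forces $\|f_n-f\|_{W_{RL}^{\alpha,p}(t_0,t_1;X)}\to0$, completing the argument.
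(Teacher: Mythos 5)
Your proof is correct and takes essentially the same route as the paper's: it reduces completeness to that of $W^{[\alpha]-1,p}(t_0,t_1;X)$ and $L^p(t_0,t_1;X)$, transfers convergence through the bounded operator $J_{t_0,t}^{[\alpha]-\alpha}$ via item $(i)$ of Remark \ref{remarkfracriint}, and identifies the limit of $D_{t_0,t}^\alpha f_n$ by closedness of the weak derivative. The only difference is that you spell out explicitly the test-function duality argument that the paper compresses into ``the definition of weak derivative and a classical argument of convergence,'' which is a welcome clarification rather than a deviation.
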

\begin{proof} If $\{\phi_j\}_{j=1}^\infty\subset W_{RL}^{\alpha,p}(t_0,t_1;X)$ is a Cauchy sequence, then there should exist $g\in W^{[\alpha]-1,p}(t_0,t_1;X)$ and $h\in L^p(t_0,t_1;X)$ such that $\phi_j\rightarrow g$ in the topology of $W^{[\alpha]-1,p}(t_0,t_1;X)$ and $D_{t_0,t}^\alpha\phi_j\rightarrow h$ in the topology of $L^p(t_0,t_1;X)$. Hence, item $(i)$ of Remark \ref{remarkfracriint} ensures that $J_{t_0,t}^{[\alpha]-\alpha}\phi_j\rightarrow J_{t_0,t}^{[\alpha]-\alpha}g$ in the topology of $L^p(t_0,t_1;X)$. Finally, the definition of weak derivative and a classical argument of convergence allows us to conclude that $D_{t_0,t}^\alpha g(t)=h(t)$ for a.e. $t\in[t_0,t_1]$, proving that $g\in W_{RL}^{\alpha,p}(t_0,t_1;X)$ and $\phi_j\rightarrow g$ in the topology of $W_{RL}^{\alpha,p}(t_0,t_1;X)$, as we wanted.
\end{proof}

\begin{remark}
For $\alpha\in\mathbb{N}$ and $p\geq1$, we have that $W_{RL}^{\alpha,p}(t_0,t_1;X)=W^{\alpha,p}(t_0,t_1;X)$ and $\|\cdot\|_{W_{RL}^{\alpha,p}(t_0,t_1;X)}=\|\cdot\|_{W^{\alpha,p}(t_0,t_1;X)}$. Just recall item $(i)$ of Remark \ref{remarkparaajud}.\vspace*{0.2cm}
\end{remark}

As a consequence of Remarks \ref{remarkfracriint} and \ref{remarkparaajud}, we obtain:

\begin{theorem} If $\alpha\in[1,\infty)$, then $J_{t_0,t}^\alpha:L^1(t_0,t_1;X)\rightarrow W^{\alpha,1}_{RL}(t_0,t_1;X)$ is a bounded linear operator.
\end{theorem}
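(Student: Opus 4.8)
The plan is to verify directly the two conditions in Definition \ref{sobolevriemann} that characterize membership of $J_{t_0,t}^\alpha f$ in $W^{\alpha,1}_{RL}(t_0,t_1;X)$, namely that $J_{t_0,t}^\alpha f\in W^{[\alpha]-1,1}(t_0,t_1;X)$ and that $D_{t_0,t}^\alpha[J_{t_0,t}^\alpha f]\in L^1(t_0,t_1;X)$, while tracking norms so as to produce the boundedness estimate. Linearity is inherited from the linearity of $J_{t_0,t}^\alpha$, so everything reduces to bounding $\|J_{t_0,t}^\alpha f\|_{W^{\alpha,1}_{RL}(t_0,t_1;X)}$ by a fixed multiple of $\|f\|_{L^1(t_0,t_1;X)}$. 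Throughout I set $m:=[\alpha]-1$, and I note that, since $[\alpha]$ is the least integer strictly greater than $\alpha\geq1$, one has $0\leq\alpha-m<1$ and $1\leq m\leq\alpha$, with $m=\alpha$ precisely when $\alpha$ is an integer.

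The fractional-derivative term is the easy one. Applying item $(ii)$ of Remark \ref{remarkparaajud} with $\beta=\alpha$ (admissible, as $0<\alpha\leq\alpha$) together with the convention $J_{t_0,t}^0 f=f$ from item $(ii)$ of Remark \ref{remarkfracriint}, I obtain $D_{t_0,t}^\alpha[J_{t_0,t}^\alpha f]=J_{t_0,t}^{0}f=f$ for almost every $t$. In particular this quantity lies in $L^1(t_0,t_1;X)$ and contributes exactly $\|f\|_{L^1(t_0,t_1;X)}$ to the $W^{\alpha,1}_{RL}$-norm.

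For membership in $W^{m,1}(t_0,t_1;X)$ I would use the semigroup property from item $(iii)$ of Remark \ref{remarkfracriint} to factor $J_{t_0,t}^\alpha=J_{t_0,t}^{m}\circ J_{t_0,t}^{\alpha-m}$, and set $g:=J_{t_0,t}^{\alpha-m}f$, which belongs to $L^1(t_0,t_1;X)$ by item $(i)$ of Remark \ref{remarkfracriint}. Since $m$ is a nonnegative integer, $J_{t_0,t}^{m}g$ is an $m$-fold iterated Bochner integral of an $L^1$ function, so the Bochner form of the fundamental theorem of calculus yields that it is $m$-times weakly differentiable with $\tfrac{d^j}{dt^j}J_{t_0,t}^{m}g=J_{t_0,t}^{m-j}g=J_{t_0,t}^{\alpha-j}f$ for $j=0,\ldots,m$. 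Each of these derivatives lies in $L^1(t_0,t_1;X)$, again by item $(i)$ of Remark \ref{remarkfracriint} and the fact that $\alpha-j\geq\alpha-m\geq0$, so $J_{t_0,t}^\alpha f\in W^{m,1}(t_0,t_1;X)$ with norm $\sum_{j=0}^m\|J_{t_0,t}^{\alpha-j}f\|_{L^1(t_0,t_1;X)}$.

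Collecting the two parts and using the operator-norm bound from item $(i)$ of Remark \ref{remarkfracriint} gives
\begin{equation*}
\|J_{t_0,t}^\alpha f\|_{W^{\alpha,1}_{RL}(t_0,t_1;X)}\leq\left[1+\sum_{j=0}^{m}\frac{(t_1-t_0)^{\alpha-j}}{\Gamma(\alpha-j+1)}\right]\|f\|_{L^1(t_0,t_1;X)},
\end{equation*}
which is exactly the sought boundedness (with the understanding that the index $j$ for which $\alpha-j=0$ contributes the factor $1$, consistently with $\|J_{t_0,t}^0 f\|_{L^1}=\|f\|_{L^1}$). The only genuinely delicate point I anticipate is the differentiation identity $\tfrac{d^j}{dt^j}J_{t_0,t}^{m}g=J_{t_0,t}^{m-j}g$: one must be careful that the Bochner-valued fundamental theorem of calculus legitimately passes an integer number of weak derivatives through the iterated integral, and that the bookkeeping with the convention for $[\alpha]$ stays consistent in the integer case, where the statement must collapse to the classical $W^{\alpha,1}$ identity of the preceding remark. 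Everything else is a direct application of Remarks \ref{remarkfracriint} and \ref{remarkparaajud}.
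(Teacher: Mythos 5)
Your proposal is correct and takes essentially the same route as the paper's proof: both reduce membership in $W^{[\alpha]-1,1}(t_0,t_1;X)$ to the identity $\tfrac{d^k}{dt^k}\big[J_{t_0,t}^{\alpha}f\big]=J_{t_0,t}^{\alpha-k}f$, use $D_{t_0,t}^{\alpha}\big[J_{t_0,t}^{\alpha}f\big]=f$ for the fractional term, and conclude with the identical norm bound from item $(i)$ of Remark \ref{remarkfracriint}. The only difference is cosmetic: the paper obtains the derivative identity by citing item $(ii)$ of Remark \ref{remarkparaajud} directly, whereas you re-derive it through the semigroup factorization $J_{t_0,t}^{\alpha}=J_{t_0,t}^{[\alpha]-1}\circ J_{t_0,t}^{\alpha-[\alpha]+1}$ and the Bochner fundamental theorem of calculus.
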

\begin{proof}
Observe that the assertion $(ii)$ in Remark \ref{remarkparaajud} guarantees that
\begin{equation*}\dfrac{d^k}{dt^k}\left[J_{t_0,t}^\alpha f(t)\right]=J_{t_0,t}^{\alpha-k}f(t),\end{equation*}
is valid for every $k\in\{0,1,\ldots,[\alpha]-1\}$. Therefore, since item $(i)$ of Remark \ref{remarkfracriint} ensures that $J_{t_0,t}^{\beta}:L^1(t_0,t_1;X)\rightarrow L^1(t_0,t_1;X)$ is bounded for any $\beta>0$, it follows that $J_{t_0,t}^\alpha f(t)\in W^{[\alpha]-1,1}(t_0,t_1;X)$. Again, from item $(ii)$ of Remark \ref{remarkparaajud}, note that
$$D_{t_0,t}^\alpha\left[J_{t_0,t}^{\alpha}f(t)\right]=f(t)\in L^1(t_0,t_1;X).$$
Therefore we have that $J_{t_0,t}^{\alpha}f\in W^{\alpha,1}_{RL}(t_0,t_1;X)$.

Finally, from item $(i)$ of Remark \ref{remarkfracriint} we have that
\begin{multline*}\|J_{t_0,t}^{\alpha}f\|_{W^{\alpha,1}_{RL}(t_0,t_1;X)}=\sum_{k=0}^{[\alpha]-1}\left\|\dfrac{d^k}{dt^k}\left[J_{t_0,t}^\alpha f\right]\right\|_{L^1(t_0,t_1;X)}+\left\|D_{t_0,t}^\alpha\left[J_{t_0,t}^\alpha f\right]\right\|_{L^1(t_0,t_1;X)}\\
=\sum_{k=0}^{[\alpha]-1}\left\|J_{t_0,t}^{\alpha-k} f\right\|_{L^1(t_0,t_1;X)}+\left\|f\right\|_{L^1(t_0,t_1;X)}\leq\left[1+\sum_{k=0}^{[\alpha]-1}\dfrac{(t_1-t_0)^{\alpha-k}}{\Gamma(\alpha-k+1)}\right]\left\|f\right\|_{L^1(t_0,t_1;X)}.\end{multline*}

In other words, $J_{t_0,t}^\alpha:L^1(t_0,t_1;X)\rightarrow W^{\alpha,1}_{RL}(t_0,t_1;X)$ is a bounded linear operator.
\end{proof}

We conclude this section by showing that the space $W^{\alpha,1}_{RL}(t_0,t_1;X)$ is sharp in the following sense:

\begin{theorem} If $\alpha\in[1,\infty)$, $\eta_1\in[\alpha,\infty)$ and $\eta_2\in[1,\infty)$ are such that $\eta_1+\eta_2>\alpha+1$, then $J_{t_0,t}^\alpha\big(L^1(t_0,t_1;X)\big)\not\subset W^{\eta_1,\eta_2}_{RL}(t_0,t_1;X)$.
\end{theorem}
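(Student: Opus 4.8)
The plan is to prove the statement by contradiction via an explicit counterexample: I will exhibit a single $f\in L^1(t_0,t_1;X)$ whose fractional integral $J_{t_0,t}^\alpha f$ fails to lie in $W^{\eta_1,\eta_2}_{RL}(t_0,t_1;X)$. The natural candidates are power functions. Fixing any $x_0\in X$ with $\|x_0\|_X=1$, I would set $f(t)=(t-t_0)^\beta x_0$ for a parameter $\beta>-1$ chosen below; then $\|f(t)\|_X=(t-t_0)^\beta$ is integrable, so $f\in L^1(t_0,t_1;X)$. Using the Beta-integral identity $\int_{t_0}^t(t-s)^{\alpha-1}(s-t_0)^\beta\,ds=(t-t_0)^{\alpha+\beta}\,\Gamma(\alpha)\Gamma(\beta+1)/\Gamma(\alpha+\beta+1)$, one computes directly that
\[
J_{t_0,t}^\alpha f(t)=\frac{\Gamma(\beta+1)}{\Gamma(\alpha+\beta+1)}\,(t-t_0)^{\alpha+\beta}\,x_0 .
\]

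Next I would apply the RL fractional derivative of order $\eta_1$ to $g:=J_{t_0,t}^\alpha f$. Since $\alpha+\beta>0$, the function $J_{t_0,t}^{[\eta_1]-\eta_1}g$ is again an explicit power function, which is smooth on the open interval $(t_0,t_1)$; hence its $[\eta_1]$-th weak derivative coincides there with the classical one, and the power rule for the RL derivative gives
\[
D_{t_0,t}^{\eta_1}g(t)=\frac{\Gamma(\beta+1)}{\Gamma(\alpha+\beta-\eta_1+1)}\,(t-t_0)^{\alpha+\beta-\eta_1}\,x_0 .
\]
The key observation is that $D_{t_0,t}^{\eta_1}g\in L^{\eta_2}(t_0,t_1;X)$ if and only if $(\alpha+\beta-\eta_1)\eta_2>-1$, i.e. $\beta>\eta_1-\alpha-1/\eta_2$. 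Consequently, if I can choose $\beta>-1$ with $\beta\le\eta_1-\alpha-1/\eta_2$ (and the coefficient above nonzero), then $D_{t_0,t}^{\eta_1}g\notin L^{\eta_2}(t_0,t_1;X)$. Since the weak derivative is unique and must agree with the classical one on $(t_0,t_1)$, this already forces $g\notin W^{\eta_1,\eta_2}_{RL}(t_0,t_1;X)$, irrespective of the $W^{[\eta_1]-1,\eta_2}$ part of the norm.

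It remains to verify that the admissible interval $\big(-1,\eta_1-\alpha-1/\eta_2\big]$ is nonempty, and this is precisely where the hypothesis $\eta_1+\eta_2>\alpha+1$ enters. Indeed, $\eta_1\ge\alpha$ gives $\eta_1-\alpha+1\ge1$ while $\eta_2\ge1$ gives $1/\eta_2\le1$, so $\eta_1-\alpha-1/\eta_2\ge-1$ with equality exactly when $\eta_1=\alpha$ and $\eta_2=1$; the assumption $\eta_1+\eta_2>\alpha+1$ excludes that single case, making the interval genuinely nondegenerate. Within it I would pick $\beta$ so that $\alpha+\beta-\eta_1$ is not a negative integer, which keeps the factor $1/\Gamma(\alpha+\beta-\eta_1+1)$ finite and nonzero; only countably many values are excluded, so such a $\beta$ exists in the nondegenerate interval.

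The step I expect to demand the most care is the rigorous justification of the power rule for $D_{t_0,t}^{\eta_1}$ in the Bochner-valued setting together with the pole bookkeeping: one must ensure that $\Gamma(\beta+1)/\Gamma(\alpha+\beta-\eta_1+1)$ does not vanish (otherwise the candidate counterexample collapses to the zero function, which is perfectly regular) and that the computed power is genuinely the weak derivative on $(t_0,t_1)$ rather than a merely formal expression. Once these points are secured, the integrability threshold $(\alpha+\beta-\eta_1)\eta_2\le-1$ closes the argument.
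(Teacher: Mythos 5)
Your argument is correct, and it takes a genuinely more unified route than the paper's. Both proofs test $J_{t_0,t}^{\alpha}$ on the same family of power functions, but the paper carries out the derivative computation only in two low-order configurations --- $\eta_1=\alpha$ with $\eta_2>1$, where $D_{t_0,t}^{\alpha}\big[J_{t_0,t}^{\alpha}f\big]=f$ requires no power rule, and $\alpha<\eta_1\le 2$ with $\eta_2=1$, done by an explicit second-derivative calculation --- and then disposes of all remaining parameters by contradiction, via the inclusions $W^{\eta_1,\eta_2}_{RL}\subset W^{\eta_1,1}_{RL}$ and $W^{\eta_1,1}_{RL}\subset W^{2,1}$. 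You instead establish the full RL power rule $D_{t_0,t}^{\eta_1}(t-t_0)^{\alpha+\beta}=\frac{\Gamma(\alpha+\beta+1)}{\Gamma(\alpha+\beta-\eta_1+1)}(t-t_0)^{\alpha+\beta-\eta_1}$ once and for all, read off the failure threshold $\beta\le\eta_1-\alpha-1/\eta_2$, and observe that the hypothesis $\eta_1+\eta_2>\alpha+1$ is exactly the nondegeneracy of the interval $(-1,\eta_1-\alpha-1/\eta_2]$, since equality $\eta_1-\alpha-1/\eta_2=-1$ occurs precisely at $\eta_1=\alpha$, $\eta_2=1$. Your pole bookkeeping is the right thing to insist on: when $\alpha+\beta-\eta_1\in\{-1,-2,\dots\}$ an intermediate exponent passes through $0$ and $D_{t_0,t}^{\eta_1}\big[J_{t_0,t}^{\alpha}f\big]$ vanishes identically (consistent with $1/\Gamma$ vanishing at the poles), so in particular the endpoint $\beta=\eta_1-\alpha-1$ when $\eta_2=1$ must indeed be discarded, and nondegeneracy leaves uncountably many admissible $\beta$. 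As for what each approach buys: the paper's route never needs more than classical first and second derivatives, but pays with a four-case structure whose step (iii) invokes item (ii) at $\eta_1=2$ and is therefore conclusive only when $\alpha<2$ --- for $\alpha\ge2$ one actually has $(d^2/dt^2)J_{t_0,t}^{\alpha}f=J_{t_0,t}^{\alpha-2}f\in L^1(t_0,t_1;X)$, so $J_{t_0,t}^{\alpha}\big(L^1(t_0,t_1;X)\big)\subset W^{2,1}(t_0,t_1;X)$ and the intended contradiction is not available there, while item (ii) is vacuous since $(\alpha,2]$ is empty. Your single computation is insensitive to this, covers that range directly (e.g.\ $\alpha=3$, $\eta_1=3.5$, $\eta_2=1$, any $\beta\in(-1,-1/2)$), and yields an explicit counterexample for every admissible pair $(\eta_1,\eta_2)$, at the modest cost you correctly flag: verifying that the classical derivatives of a smooth power on the open interval $(t_0,t_1)$ are its weak derivatives (immediate, since any weak derivative agrees a.e.\ with the classical one there) and keeping the coefficient $\Gamma(\beta+1)/\Gamma(\alpha+\beta-\eta_1+1)$ nonzero.
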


\begin{proof} Assume, without loss of generality, that $t_0=0$, $t_1=1$ and $X=\mathbb{R}$.
\begin{itemize}
\item[(i)] Observe that $J_{0,t}^\alpha\big(L^1(0,1;\mathbb{R})\big)\not\subset W^{\alpha,\eta_2}_{RL}(0,1;\mathbb{R})$ if $\eta_2>1$, since  function $f(t)=t^{\gamma}$, for $\gamma=-(\eta_2+1)/(2\eta_2)$, belongs to $L^1(0,1;\mathbb{R})$ however $D_{0,t}^{\alpha}\left[J_{0,t}^\alpha f(t)\right]=t^\gamma\not\in L^{\eta_2}(0,1;\mathbb{R}).$\vspace*{0.2cm}

%

\item[(ii)] If $\alpha<\eta_1\leq 2$ and $f(t)=t^{\gamma}$, with $-1<\gamma<\eta_1-\alpha-1,$
then:
\begin{itemize}
\item[(a)] $f\in L^1(0,1;\mathbb{R})$, since $\gamma+1>0$;\vspace*{0.2cm}
\item[(b)] Item $(i)$ of Remark \ref{remarkfracriint} ensures that $J_{0,t}^{\alpha-1} f\in L^1(0,1;\mathbb{R})$. Thus, item $(ii)$ of Remark \ref{remarkparaajud} ensures that $(d/dt)J_{0,t}^\alpha f=J_{0,t}^{\alpha-1} f$, that is, $(d/dt)J_{0,t}^\alpha f\in L^1(0,1;\mathbb{R})$;\vspace*{0.2cm}
\item[(c)] Since $0<  1 - \eta_1 + \alpha<\gamma + 2 - \eta_1 + \alpha < 1$, we have
$$\hspace*{1cm}D_{0,t}^{\eta_1}\big[J_{0,t}^\alpha f\big]=(d^{2}/dt^{2})\big[J_{0,t}^{2-\eta_1+\alpha}t^\gamma\big]=M_1(d^{2}/dt^{2})t^{\gamma+2-\eta_1+\alpha}=M_2\,t^{\gamma-\eta_1+\alpha},$$
for some $M_1,M_2>0$. Therefore we have that $D_{0,t}^{\eta_1}\big[J_{0,t}^\alpha f\big]\not\in L^1(0,1;\mathbb{R})$, since $1+\gamma-\eta_1+\alpha<0$.
\end{itemize}
In other words, $J_{0,t}^\alpha\big(L^1(0,1;\mathbb{R})\big)\not\subset W^{\eta_1,1}_{RL}(0,1;\mathbb{R})$.\vspace*{0.2cm}
\item[(iii)] If $\max\{\alpha,2\}<\eta_1$, then $[\eta_1]-1\geq2$ and therefore $W^{\eta_1,1}_{RL}(0,1;\mathbb{R})\subset W^{2,1}(0,1;\mathbb{R})$. Hence, $J_{0,t}^\alpha\big(L^1(0,1;\mathbb{R})\big)\not\subset W^{\eta_1,1}_{RL}(0,1;\mathbb{R})$, since otherwise $J_{0,t}^\alpha\big(L^1(0,1;\mathbb{R})\big)\subset W^{2,1}_{RL}(0,1;\mathbb{R})$, which would contradicts item $(ii)$.\vspace*{0.2cm}
\item[(iv)] Finally, for $\alpha<\eta_1$ and $1<\eta_2$, since  $W^{\eta_1,\eta_2}_{RL}(0,1;\mathbb{R})\subset W^{\eta_1,1}_{RL}(0,1;\mathbb{R}),$
we cannot have $J_{0,t}^\alpha\big(L^1(0,1;\mathbb{R})\big)\subset W^{\eta_1,\eta_2}_{RL}(0,1;\mathbb{R})$, since otherwise $J_{0,t}^\alpha\big(L^1(0,1;\mathbb{R})\big)\subset W^{\eta_1,1}_{RL}(0,1;\mathbb{R})$, which would contradicts items $(ii)$ or  $(iii)$.
\end{itemize}
\end{proof}

\section{Some Special Spaces}

In this section, we present the classical BMO space and introduce the $K_{\gamma}(t_0, t_1; X)$ space. To justify studying the operator $J^\alpha$ from $L^p$ into these spaces, we first need to understand their relation to the $L^p$ spaces. This is our main objective here.

\subsection{The BMO space} We begin with the spaces introduced by F. John and L. Nirenberg in \cite{JoNi1}, here adapted to vector-valued functions.

\begin{definition} If $f\in L^1_{loc}(t_0,t_1;X)$ and
$$[f]_{BMO(t_0,t_1;X)}:=\sup_{[a,b]\subset (t_0,t_1)}{\left[\dfrac{1}{(b-a)}\int_a^b{\|f(s)-\operatorname{avg}_{[a,b]}(f)\|_X}\,ds\right]}<\infty,$$
where
$$\operatorname{avg}_{[a,b]}(f):=\dfrac{1}{(b-a)}\int_a^b{f(s)}\,ds,$$
we say that $f$ is a vector-valued function whose mean oscillation is bounded (finite). We denote the vector space of all those functions by $BMO(t_0,t_1;X)$.
\end{definition}

Functions in $BMO(t_0,t_1;X)$ are important in harmonic analysis and partial differential equations, occasionally serving as a substitute for the space $L^\infty(t_0,t_1;X)$. Not being different here, this space ends up demonstrating great importance in our discussion.

\begin{remark}
 Notice that $[\,\cdot\,]_{BMO(t_0,t_1;X)}$ does not define a norm in $BMO(t_0,t_1;X)$, since for any vector $x\in X$ we have that
$$[x]_{BMO(t_0,t_1;X)}=0.$$
\end{remark}

\begin{proposition}\label{propaux04} It holds that $L^\infty(t_0,t_1;X) \subsetneq BMO(t_0,t_1;X)$
\end{proposition}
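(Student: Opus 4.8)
The plan is to prove the two halves of the strict inclusion separately: first the containment $L^\infty(t_0,t_1;X) \subseteq BMO(t_0,t_1;X)$, and then its strictness by exhibiting an explicit unbounded function of bounded mean oscillation.

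For the containment I would fix $f \in L^\infty(t_0,t_1;X)$ and an arbitrary subinterval $[a,b]\subset(t_0,t_1)$. Since $\|\operatorname{avg}_{[a,b]}(f)\|_X \leq \frac{1}{b-a}\int_a^b \|f(s)\|_X\,ds \leq \|f\|_{L^\infty(t_0,t_1;X)}$, the triangle inequality gives $\|f(s)-\operatorname{avg}_{[a,b]}(f)\|_X \leq 2\|f\|_{L^\infty(t_0,t_1;X)}$ for a.e. $s\in[a,b]$. Averaging over $[a,b]$ and then taking the supremum over all such intervals yields $[f]_{BMO(t_0,t_1;X)} \leq 2\|f\|_{L^\infty(t_0,t_1;X)} < \infty$, so $f\in BMO(t_0,t_1;X)$ and the containment holds.

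For strictness I would fix a unit vector $x_0\in X$ and consider $g(t):=\log(t-t_0)\,x_0$. Clearly $g\notin L^\infty(t_0,t_1;X)$, since $\log(t-t_0)\to-\infty$ as $t\to t_0^+$, while $g\in L^1_{loc}(t_0,t_1;X)$ (indeed $\log(t-t_0)$ is integrable near $t_0$). It remains to check $g\in BMO(t_0,t_1;X)$. Since $\|g(s)-c\,x_0\|_X = |\log(s-t_0)-c|$ for any scalar $c$, the vector-valued problem collapses to the scalar statement that $t\mapsto\log(t-t_0)$ lies in $BMO(t_0,t_1)$, the classical prototype of an unbounded $BMO$ function. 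To verify this scalar claim I would estimate the mean oscillation on an arbitrary $[a,b]\subset(t_0,t_1)$: after the change of variables $u=t-t_0$ it suffices to bound $\frac{1}{\beta-\alpha}\int_\alpha^\beta|\log u - \operatorname{avg}_{[\alpha,\beta]}(\log)|\,du$ uniformly in $0<\alpha<\beta$. Using that replacing the true average by any constant costs at most a factor $2$, I would take the right-endpoint value $\log\beta$ as comparison constant, so that $|\log u-\log\beta|=\log(\beta/u)$, and then compute $\frac{1}{\beta-\alpha}\int_\alpha^\beta\log(\beta/u)\,du$ via the substitution $v=u/\beta$. This produces the scale-invariant quantity $\frac{1}{1-r}\int_r^1\log(1/v)\,dv = 1 + \frac{r\log r}{1-r}$ with $r=\alpha/\beta\in(0,1)$, which stays in $(0,1)$ for every $r$; hence $[g]_{BMO(t_0,t_1;X)}\leq 2$ and $g\in BMO(t_0,t_1;X)$.

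The main obstacle is exactly this last uniform estimate: one must rule out that the logarithmic singularity at $t_0$ inflates the mean oscillation on intervals shrinking toward $t_0$. The substitution $v=u/\beta$ is what resolves it, revealing that the oscillation of $\log$ depends only on the ratio $r=\alpha/\beta$ and not on the absolute position of $[\alpha,\beta]$; the resulting function of $r$ is then checked to be bounded on $(0,1)$ by an elementary monotonicity argument. Everything else in the proof is the standard triangle-inequality bookkeeping carried out in the first two steps.
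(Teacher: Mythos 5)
Your proof is correct, and both halves match the paper's strategy: the containment with constant $2$ is verbatim the paper's argument, and for strictness you use the same extremal example (a logarithm times a fixed vector). Where you differ is in how you verify that the logarithm has bounded mean oscillation. The paper computes the seminorm \emph{exactly}: parametrizing subintervals as $[a,a\varepsilon]$, it derives a closed-form expression $\eta(\varepsilon)$ for the mean oscillation (which, as in your argument, depends only on the ratio $\varepsilon$, not on $a$), then argues $\eta$ is monotone increasing and continuous on $(1,\infty)$ to conclude $[\phi]_{BMO(0,1;X)}=(2/e)\|x\|_X$. You instead invoke the standard comparison-constant fact
$$\frac{1}{b-a}\int_a^b\big\|f(s)-\operatorname{avg}_{[a,b]}(f)\big\|_X\,ds\leq\frac{2}{b-a}\int_a^b\|f(s)-c\|_X\,ds,$$
choose $c=\log\beta$, and reduce by the substitution $v=u/\beta$ to the scale-invariant quantity $1+\frac{r\log r}{1-r}\in(0,1)$, giving $[g]_{BMO(t_0,t_1;X)}\leq 2$. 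Your route is shorter and more robust: it avoids computing the true average of $\log$ over an interval and avoids the monotonicity analysis of $\eta$, at the cost of losing the sharp constant $2/e$ that the paper obtains. One cosmetic advantage of your version is that it works directly on $(t_0,t_1)$ without normalizing to $(0,1)$ (your identity $|\log u-\log\beta|=\log(\beta/u)$ is valid regardless of the sign of the logarithms, so intervals with $\beta>1$ cause no trouble). Since the proposition only asserts strict inclusion, your weaker bound fully suffices.
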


\begin{proof} The definition of $[\,\cdot\,]_{BMO(t_0,t_1;X)}$ readily implies, for any $f\in L^\infty(t_0,t_1;X)$, the inequality
    $$[f]_{BMO(t_0,t_1;X)}\leq2\|f\|_{L^\infty(t_0,t_1;X)},$$
    establishing the inclusion $L^\infty(t_0,t_1;X)\subset BMO(t_0,t_1;X)$. To prove the strictness of the inclusion, we provide an adaptation of \cite{Mo1}. Suppose that $t_0=0$ and $t_1=1$. Consider $x\in X\setminus\{0\}$ and $\phi:(0,1)\rightarrow X$ given by $\phi(t)=\big(\log{t}\big)x$. Clearly $\phi\not\in L^\infty(0,1;X)$. On the other hand, for $\varepsilon>1$ and $[a,a\varepsilon]\subset (0,1)$, we have that
        \begin{multline*}\hspace*{2.3cm}\dfrac{\|x\|_X}{(a\varepsilon-a)}\int_a^{a\varepsilon}{\big|\phi(s)-\operatorname{avg}_{[a,a\varepsilon]}(\phi)\big|}\,ds
        \\=\|x\|_X\underbrace{\left(\dfrac{2}{e}\right)\left[\dfrac{(\varepsilon-1)\varepsilon^{\varepsilon/(\varepsilon-1)}-e[\varepsilon\log\varepsilon
    ]}{(\varepsilon-1)^2}\right]}_{=\eta(\varepsilon)}.\end{multline*}
     Observing that function $\eta(\varepsilon)$ is both monotonically increasing and continuous in $(1,\infty)$, it follows that
    \begin{multline*}\hspace*{2.3cm}[\phi]_{BMO(0,1;X)}=\|x\|_X\sup{\{\eta(\varepsilon):\varepsilon>1\textrm{ such that }[a,a\varepsilon]\subset(0,1)\}}\\=\|x\|_X\lim_{\epsilon\rightarrow\infty}\eta(\varepsilon)=(2/e)\|x\|_X<\infty.\vspace*{0.2cm}\end{multline*}
\end{proof}

Since $L^\infty(t_0, t_1; X)\subset L^p(t_0, t_1; X)$, a natural question that arises is the relation between BMO spaces and $L^p$ spaces.

\begin{remark}
As demonstrated by the Johnson-Neugebauer example (refer to \cite[Remark (2) of Theorem 3]{JoNe1} for more details), which is given by
    $$\phi(t)=\max\{\log(1/t),1/(\log(te^2))\},$$
    we can observe that $\phi\in BMO(0,1;\mathbb{R})$ but $\phi\not\in L^p(0,1;\mathbb{R})$. Therefore, we obtain $BMO(t_0, t_1; X)\nsubseteq L^p(t_0, t_1; X)$, for $1\leq p<\infty$.
\end{remark}

Despite the previous Remark, if we consider $L_{\text{loc}}^r(t_0, t_1; X)$ we have:

\begin{proposition}
It holds that $BMO(t_0, t_1; X)\subset\bigcap_{r\geq1}L_{\text{loc}}^r(t_0, t_1; X).$
\end{proposition}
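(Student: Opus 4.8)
The plan is to prove the local integrability statement $BMO(t_0,t_1;X)\subset\bigcap_{r\geq1}L^r_{\text{loc}}(t_0,t_1;X)$ by establishing the vector-valued analogue of the classical John--Nirenberg inequality on compact subintervals, and then deducing local $L^r$-integrability from the resulting exponential decay of the distribution function. First I would fix $f\in BMO(t_0,t_1;X)$, an arbitrary compact $K\subset(t_0,t_1)$ contained in some $[a,b]\subset(t_0,t_1)$, and set $\lambda:=[f]_{BMO(t_0,t_1;X)}$. The key object is the distribution function of the scalar field $s\mapsto\|f(s)-\operatorname{avg}_{[a,b]}(f)\|_X$, and the goal is the bound
\begin{equation*}
\big|\{s\in[a,b]:\|f(s)-\operatorname{avg}_{[a,b]}(f)\|_X>t\}\big|\leq C_1(b-a)\,e^{-C_2 t/\lambda},
\end{equation*}
valid for all $t>0$ with absolute constants $C_1,C_2>0$.

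The main step is to adapt the Calder\'on--Zygmund stopping-time argument that proves John--Nirenberg. I would perform a dyadic decomposition of $[a,b]$: at height $t$, select the maximal dyadic subintervals on which the average of $\|f-\operatorname{avg}_{[a,b]}(f)\|_X$ exceeds $t$, observe that on each selected interval $I$ the average of this scalar function is at most $2\lambda$ by the defining supremum of $[\,\cdot\,]_{BMO}$, and that $\|\operatorname{avg}_I(f)-\operatorname{avg}_{[a,b]}(f)\|_X$ is controlled by a constant multiple of $\lambda$ times the generation count. The essential point that makes the vector-valued case go through exactly as the scalar one is that everything is phrased in terms of the \emph{scalar} quantity $\|f(s)-x\|_X$ (with $x$ a fixed vector), and Jensen's inequality $\|\operatorname{avg}_I(f)-x\|_X\leq\operatorname{avg}_I\|f-x\|_X$ holds for the Bochner integral; thus the oscillation estimates propagate through the generations just as in the real-valued setting. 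Iterating the selection over integer multiples of a threshold yields the geometric decay of the measure of the super-level sets, which is the exponential bound displayed above.

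Once the John--Nirenberg bound is in hand, finishing is routine: for each $r\geq1$ I would write
\begin{equation*}
\int_a^b\|f(s)-\operatorname{avg}_{[a,b]}(f)\|_X^r\,ds=r\int_0^\infty t^{r-1}\big|\{s\in[a,b]:\|f(s)-\operatorname{avg}_{[a,b]}(f)\|_X>t\}\big|\,dt,
\end{equation*}
substitute the exponential decay estimate, and recognize the resulting integral as a convergent Gamma-type integral, giving $\|f-\operatorname{avg}_{[a,b]}(f)\|_{L^r(a,b;X)}<\infty$. Since the constant vector $\operatorname{avg}_{[a,b]}(f)\in X$ lies in $L^r(a,b;X)$, the triangle inequality yields $f\in L^r(a,b;X)$, and as $K\subset[a,b]$ was arbitrary and $r\geq1$ was arbitrary, the stated inclusion follows.

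I expect the main obstacle to be a careful, self-contained verification of the stopping-time iteration in the Bochner setting, in particular keeping track of the constants across generations and confirming that the selection procedure and the Lebesgue differentiation theorem are applied only to the scalar function $s\mapsto\|f(s)-\operatorname{avg}_{[a,b]}(f)\|_X$, so that no genuinely vector-valued maximal-function or duality machinery is needed. A secondary technical point is ensuring the argument is uniform in the chosen interval $[a,b]$ and does not degenerate near the endpoints $t_0,t_1$; this is handled precisely by restricting to compact $K$ and by the fact that the $BMO$ seminorm is a supremum over all subintervals, so the local bound on $[a,b]$ inherits the single constant $\lambda$.
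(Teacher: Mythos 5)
Your proposal is correct and takes essentially the same route as the paper: both arguments rest on the vector-valued John--Nirenberg exponential bound for the distribution function of $s\mapsto\|f(s)-\operatorname{avg}_{[a,b]}(f)\|_X$, then convert it into $L^r$ bounds via the layer-cake formula and finish with the triangle inequality against the constant $\operatorname{avg}_{[a,b]}(f)$ on an interval $[a,b]$ containing the compact $K$. The only difference is that you sketch the Calder\'on--Zygmund stopping-time proof of the John--Nirenberg inequality in the Bochner setting, whereas the paper simply cites \cite{JoNi1}; your observation that the whole argument runs through the scalar quantity $\|f(s)-x\|_X$ together with Jensen's inequality for the Bochner integral is precisely why that citation suffices.
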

\begin{proof} For any interval $[a, b] \subset (t_0, t_1)$ and $f \in BMO(t_0, t_1; X)$, it holds that:
$$\big|\{s\in[a,b]:\|f(s)-\operatorname{avg}_{[a,b]}(f)\|_X>\lambda\}\big|\leq c_1(b-a)e^{-c_2\lambda/\|f\|_{BMO(t_0,t_1;X)}},$$
for $c_1,c_2> 0$ (see \cite{JoNi1}). From this inequality, we can prove that
\begin{multline}\label{helpme00}\hspace*{1cm}\int_a^b\|f(s)-\operatorname{avg}_{[a,b]}(f)\|^r_X\,ds\leq(t_1-t_0)c_1r\int_0^\infty s^{r-1}e^{-c_2s/\|f\|_{BMO(t_0,t_1;X)}}\,ds\\=(t_1-t_0)c_1\left(\dfrac{\|f\|_{BMO(t_0,t_1;X)}}{c_2}\right)^r\Gamma(r+1),\end{multline}
for any $r\geq1$. Furthermore, for any compact set $K \subset (t_0, t_1)$, there exists an interval $[a, b] \subset (t_0, t_1)$ such that $K \subset [a, b].$ Consequently, for $r\geq1$, we can conclude:
\begin{multline*}\hspace*{1cm}\|f\|_{L^r(K;X)}\leq\|f-\operatorname{avg}_{[a,b]}(f)\|_{L^r(a,b;X)}+\|\operatorname{avg}_{[a,b]}(f)\|_{L^r(a,b;X)}
\\\leq \dfrac{\big[(t_1-t_0)c_1\Gamma(r+1)\big]^{1/r}\|f\|_{BMO(t_0,t_1;X)}}{c_2}+(b-a)^{(1/r)-1}\|f\|_{L^1(a,b;X)}<\infty,\end{multline*}
This implies that $BMO(t_0, t_1; X)\subset\bigcap_{r\geq1}L_{\text{loc}}^r(t_0, t_1; X).$
\end{proof}

To relate $BMO(t_0, t_1; X)$ and $L^p(t_0, t_1; X)$ spaces, we have:

\begin{theorem}
It holds that $BMO(t_0,t_1;X)\cap L^1(t_0,t_1;X)\subsetneq \bigcap_{r\geq1}L^r(t_0,t_1;X)$. Moreover, for any $r\geq1$ and $f\in BMO(t_0,t_1;X)\cap L^1(t_0,t_1;X),$ there exists $C>0$ such that
\begin{equation}\label{iterbmo}\|f\|_{L^r(t_0,t_1;X)}\leq Cr\|f\|^{1/r}_{L^1(t_0,t_1;X)}\|f\|^{1-(1/r)}_{BMO(t_0,t_1;X)}.\end{equation}
\end{theorem}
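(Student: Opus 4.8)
The plan is to prove the quantitative estimate \eqref{iterbmo} first, since the inclusion $BMO(t_0,t_1;X)\cap L^1(t_0,t_1;X)\subset\bigcap_{r\geq1}L^r(t_0,t_1;X)$ is an immediate consequence of it, and only afterwards to exhibit a single function establishing the strictness. Writing $L=t_1-t_0$, I would reduce to the mean-zero case: for $f\in BMO(t_0,t_1;X)\cap L^1(t_0,t_1;X)$ set $g:=f-\operatorname{avg}_{[t_0,t_1]}(f)$, so that $[g]_{BMO}=[f]_{BMO}$, $\operatorname{avg}_{[t_0,t_1]}(g)=0$ and $\|g\|_{L^1(t_0,t_1;X)}\leq 2\|f\|_{L^1(t_0,t_1;X)}$. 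The constant part is harmless: using $\|\operatorname{avg}_{[t_0,t_1]}(f)\|_X\leq L^{-1}\|f\|_{L^1(t_0,t_1;X)}$ together with the fact that the $BMO$-norm dominates $\|\operatorname{avg}_{[t_0,t_1]}(f)\|_X$, one gets
$$\|\operatorname{avg}_{[t_0,t_1]}(f)\|_{L^r(t_0,t_1;X)}=L^{1/r}\|\operatorname{avg}_{[t_0,t_1]}(f)\|_X\leq\|f\|_{L^1(t_0,t_1;X)}^{1/r}\|f\|_{BMO(t_0,t_1;X)}^{1-1/r},$$
which already has the desired shape. Thus everything reduces to the core bound $\|g\|_{L^r}^r\leq (Cr)^r\|g\|_{L^1}[g]_{BMO}^{r-1}$ for functions of zero average.

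The engine for the core bound is a refined, $L^1$-sensitive form of the John--Nirenberg inequality: for mean-zero $g$ there exist $c,C>0$ such that
$$\big|\{s\in[t_0,t_1]:\|g(s)\|_X>\lambda\}\big|\leq \frac{C\|g\|_{L^1(t_0,t_1;X)}}{[g]_{BMO}}\,e^{-c\lambda/[g]_{BMO}},\qquad \lambda\geq 2[g]_{BMO}.$$
The decisive difference from the estimate quoted before \eqref{helpme00} is that the prefactor is $\|g\|_{L^1}/[g]_{BMO}$ rather than the full length $L$; this is precisely what lets the $L^1$-norm enter with the correct exponent. I would obtain it through a Calder\'on--Zygmund stopping-time decomposition of $\|g\|_X$ at the height $\lambda_0=2[g]_{BMO}$: selecting the maximal dyadic subintervals $\{Q_j\}$ on which the average of $\|g\|_X$ exceeds $\lambda_0$, disjointness and the mean-zero hypothesis give $\sum_j|Q_j|\leq\lambda_0^{-1}\int_{t_0}^{t_1}\|g(s)\|_X\,ds=\|g\|_{L^1}/\lambda_0$, while iterating the decomposition on each $Q_j$ (recentering $g$ and invoking $[g]_{BMO}$) contracts the total measure by a factor $1/2$ per generation. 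Summing the resulting geometric series yields the displayed bound.

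With this distribution estimate in hand I would close the core bound via the layer-cake formula $\|g\|_{L^r}^r=r\int_0^\infty\lambda^{r-1}\big|\{\|g\|_X>\lambda\}\big|\,d\lambda$, split at $\lambda=2[g]_{BMO}$. On $(0,2[g]_{BMO}]$ Chebyshev's bound $|\{\|g\|_X>\lambda\}|\leq\|g\|_{L^1}/\lambda$ integrates to $\tfrac{r}{r-1}\|g\|_{L^1}(2[g]_{BMO})^{r-1}$; on $(2[g]_{BMO},\infty)$ the refined estimate produces $C\,\|g\|_{L^1}[g]_{BMO}^{-1}\Gamma(r+1)([g]_{BMO}/c)^r$. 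Both contributions reduce to the form $(Cr)^r\|g\|_{L^1}[g]_{BMO}^{r-1}$ after using $r\Gamma(r)=\Gamma(r+1)$ and Stirling's bound $[\Gamma(r+1)]^{1/r}\leq Cr$, and taking $r$-th roots gives \eqref{iterbmo} for $r\geq2$ (where $r/(r-1)\leq2$). The remaining range $1\leq r\leq2$ follows by interpolating the trivial equality at $r=1$ with the estimate at $r=2$.

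Finally, for the strict inclusion it suffices to produce a function in $\bigcap_{r\geq1}L^r(t_0,t_1;X)$ that is not in $BMO(t_0,t_1;X)$; on $(0,1)$ the choice $h(t)=\big(\log(1/t)\big)^2\,x$ with $x\in X\setminus\{0\}$ works, since $\int_0^1\big(\log(1/t)\big)^{2r}\,dt=\Gamma(2r+1)<\infty$ for every $r\geq1$, whereas the super-exponential growth of $\big(\log(1/t)\big)^2$ is incompatible with the exponential integrability forced by membership in $BMO$. The main obstacle is clearly the second paragraph: establishing the refined John--Nirenberg inequality with the sharp prefactor $\|g\|_{L^1}/[g]_{BMO}$, since the plain estimate recorded before \eqref{helpme00} (prefactor $L$) is genuinely too lossy in the regime $\|g\|_{L^1}\ll L\,[g]_{BMO}$ and cannot reproduce the exponent appearing in \eqref{iterbmo}.
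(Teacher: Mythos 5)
Your proposal is correct, and for the inequality it does substantially more than the paper, which disposes of \eqref{iterbmo} in one sentence by calling it ``an adaptation of a classical result'' of Chen--Zhu \cite{ChZh1}. What you have written out --- the reduction to mean zero, the refined John--Nirenberg estimate with prefactor $\|g\|_{L^1}/[g]_{BMO}$ obtained from a Calder\'on--Zygmund stopping-time decomposition at height $\lambda_0=2[g]_{BMO}$, and the layer-cake computation split at $\lambda_0$ --- is essentially a self-contained reconstruction of that cited argument, adapted to a bounded interval and vector values; you are right that the plain estimate preceding \eqref{helpme00}, with prefactor $t_1-t_0$, cannot produce the $\|f\|_{L^1}^{1/r}$ factor. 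I checked the delicate points: the stopping intervals satisfy $\sum_j|Q_j|\leq\lambda_0^{-1}\|g\|_{L^1}$ by Chebyshev (mean zero is not actually needed there), the iteration halves the measure per generation once the level increment is a fixed multiple of $[g]_{BMO}$, and your interpolation patch for $1\leq r\leq 2$ is exact: with $1/r=\theta+(1-\theta)/2$ one gets $(1-\theta)/2=1-1/r$, so the $BMO$ exponent comes out right. Where you genuinely diverge from the paper is the strictness of the inclusion: the paper uses the one-sided logarithm $\psi(t)=\big[\chi_{(1/2,1]}(t)\log\big(1/(t-(1/2))\big)\big]x$ together with \cite[Example 7.1.4]{Gr1}, whereas you use $h(t)=\big(\log(1/t)\big)^2x$. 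Both work; your choice meshes nicely with Lemma \ref{lemmaaux002} (the case $\gamma=2$ gives $h\in\bigcap_{r\geq1}L^r$), though for the failure of $BMO$ a direct computation is cleaner than your appeal to exponential integrability: on $[a,2a]$ the oscillation of $(\log(1/t))^2$ is comparable to $2\log 2\,\log(1/a)\rightarrow\infty$ as $a\rightarrow0^+$, so the mean oscillation is unbounded.

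One caveat you should make explicit: your treatment of the constant part assumes that $\|f\|_{BMO(t_0,t_1;X)}$ dominates $\|\operatorname{avg}_{[t_0,t_1]}(f)\|_X$, i.e.\ that it is a genuine norm and not the seminorm $[\,\cdot\,]_{BMO(t_0,t_1;X)}$, which is the only quantity the paper defines (and explicitly notes is not a norm). This is not a defect of your proof but a real imprecision in the statement: with the pure seminorm, \eqref{iterbmo} fails on a bounded interval for any nonzero constant function, since the left side is $(t_1-t_0)^{1/r}\|x\|_X>0$ while the right side vanishes for $r>1$. Any convention under which the $BMO$ quantity controls the average (for instance $[f]_{BMO}+\|\operatorname{avg}_{[t_0,t_1]}(f)\|_X$, or $[f]_{BMO}+\|f\|_{L^1}$ up to a constant depending on $t_1-t_0$) validates your argument verbatim.
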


 \begin{proof} The proof of the inclusion and the inequality are simply adaptations of a classical result (see \cite[Theorem 2]{ChZh1}).

 Now to prove the strictness of the inclusion, assume without loss of generality that $t_0=0$, $t_1=1$ and $x\in X\setminus\{0\}$. Then consider function $\psi:(0,1)\rightarrow X$ given by
$$\psi(t) = \big[\chi_{(1/2,1]}(t)\log\big(1/(t-(1/2))\big)\big]x,$$
where $\chi_{(1/2,1]}(t)$ denotes the characteristic function of $(1/2,1]$. The proof that $\psi\in L^r(0,1;X)$ for every $r\geq1$ is direct, and the proof that $\psi\not\in BMO(0,1;X)$ follows the same arguments as in \cite[Example 7.1.4]{Gr1}.
\end{proof}

\begin{remark}\label{coraux001} We can summarize the above results as:
$$L^\infty(t_0,t_1;X)\subsetneq BMO(0,1;X)\cap L^1(t_0,t_1;X)\subsetneq\bigcap_{r\geq1}L^r(t_0,t_1;X).$$
\end{remark}

\subsection{The $K_{\gamma}(t_0, t_1; X)$ space}

Now we introduce a vector-valued version of a function space, which was approached by Karapetyants and Rubin in \cite{KaRu1,KaRu2}. These are the oldest references we have found.

\begin{definition} For $\gamma > 0$, the Karapetyants-Rubin space (KR-space, for short) is the collection of functions $f \in \bigcap_{r \geq 1} L^r(t_0, t_1; X)$ that satisfy
$$\sup\{r^{-\gamma}\|f\|_{L^r(t_0,t_1;X)}:r\geq1\}<\infty.$$
We denote this space by $K_{\gamma}(t_0, t_1; X)$.
\end{definition}

From the above definition, several noteworthy aspects become apparent. It is crucial to emphasize that all the subsequent proofs and conclusions, to the best of the author's knowledge, have not been previously proved in the literature.

\begin{theorem}\label{inclurubin} In $K_{\gamma}(t_0, t_1; X)$, if we consider
$$\|f\|_{K_{\gamma}(t_0, t_1; X)}:=\sup\{r^{-\gamma}\|f\|_{L^r(t_0,t_1;X)}:r\geq1\},$$
then it becomes a Banach space.
\end{theorem}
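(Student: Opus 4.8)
The plan is to verify the three standard Banach-space axioms in turn: that $\|\cdot\|_{K_\gamma}$ is a genuine norm, and that $K_\gamma(t_0,t_1;X)$ is complete under it. Checking that $\|\cdot\|_{K_\gamma}$ is a norm is the routine part. Positive homogeneity and the triangle inequality both descend directly from the corresponding properties of each $L^r$-norm after multiplying by the factor $r^{-\gamma}$ and passing to the supremum over $r\geq 1$ (the supremum of a sum being dominated by the sum of suprema). For definiteness, if $\|f\|_{K_\gamma}=0$ then in particular $r^{-\gamma}\|f\|_{L^r(t_0,t_1;X)}=0$ for every $r\geq1$, so $\|f\|_{L^1(t_0,t_1;X)}=0$ and hence $f=0$ a.e. One should also record that the supremum is finite precisely by membership in $K_\gamma$, so $\|\cdot\|_{K_\gamma}$ is well-defined and finite on the space.

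The substantive step is completeness, and this is where I expect the main obstacle to lie. First I would take a Cauchy sequence $\{f_n\}\subset K_\gamma(t_0,t_1;X)$. Since $\|f_n-f_m\|_{L^1(t_0,t_1;X)}\leq 1^{-\gamma}\|f_n-f_m\|_{L^1}\leq\|f_n-f_m\|_{K_\gamma}$ (taking $r=1$ in the supremum), the sequence is Cauchy in $L^1(t_0,t_1;X)$, which is complete; hence there is a candidate limit $f\in L^1(t_0,t_1;X)$ with $f_n\to f$ in $L^1$. The plan is then to show that in fact $f\in K_\gamma$ and that $f_n\to f$ in the $K_\gamma$-norm.

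The key to both remaining claims is a uniform-in-$r$ estimate obtained from the Cauchy property. Fix $\varepsilon>0$ and choose $N$ so that $\|f_n-f_m\|_{K_\gamma}<\varepsilon$ for all $n,m\geq N$; this means $r^{-\gamma}\|f_n-f_m\|_{L^r(t_0,t_1;X)}<\varepsilon$ for \emph{every} $r\geq1$ simultaneously. Now fix $r$ and let $m\to\infty$: since $f_m\to f$ in $L^1$, I would pass to a subsequence converging a.e.\ and apply Fatou's lemma to the functions $\|f_n(\cdot)-f_m(\cdot)\|_X^r$ to obtain $\|f_n-f\|_{L^r(t_0,t_1;X)}\leq\liminf_m\|f_n-f_m\|_{L^r(t_0,t_1;X)}$. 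Consequently $r^{-\gamma}\|f_n-f\|_{L^r(t_0,t_1;X)}\leq\varepsilon$ for every $r\geq1$ and every $n\geq N$. Taking the supremum over $r$ gives $\|f_n-f\|_{K_\gamma}\leq\varepsilon$, which simultaneously shows $f_n-f\in K_\gamma$ (so $f=(f-f_n)+f_n\in K_\gamma$ since $K_\gamma$ is a vector space) and that $f_n\to f$ in the $K_\gamma$-norm. The delicate point to handle carefully is the interchange of the $\liminf$ in $m$ with the supremum in $r$: the estimate $r^{-\gamma}\|f_n-f_m\|_{L^r}<\varepsilon$ holds uniformly in $r$ before the limit, so the uniform bound survives the application of Fatou for each fixed $r$, and only afterwards do we take the supremum over $r$ — performing the operations in this order is what makes the argument valid.
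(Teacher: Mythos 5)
Your proof is correct, and it takes a genuinely different route to completeness than the paper's. The paper first observes, as you do, that $\|f_j-f_k\|_{L^r(t_0,t_1;X)}\leq r^{\gamma}\|f_j-f_k\|_{K_{\gamma}(t_0,t_1;X)}$, but then invokes completeness of \emph{every} $L^r(t_0,t_1;X)$, $r\geq1$, to produce a single limit $f\in\bigcap_{r\geq1}L^r(t_0,t_1;X)$ with $f_j\rightarrow f$ in each $L^r$ (the boundedness of the interval guaranteeing the limits agree); it then regards $r\mapsto r^{-\gamma}\|f_j\|_{L^r(t_0,t_1;X)}$ as a Cauchy sequence in the space $B([1,\infty);\mathbb{R})$ of bounded functions with the sup norm, and identifies the uniform limit $M$ with the pointwise limit $r^{-\gamma}\|f\|_{L^r(t_0,t_1;X)}$, so that boundedness of $M$ yields $f\in K_{\gamma}(t_0,t_1;X)$. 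You instead use only completeness of $L^1$, extract one a.e.-convergent subsequence (which, coming from the $L^1$ limit, serves simultaneously for all $r$), and upgrade via Fatou with the uniform-in-$r$ Cauchy estimate, taking the supremum over $r$ only after the limit in $m$ — exactly the quantifier order that makes the argument work. Your route is more economical and, notably, more complete on one point: it delivers the norm convergence $\|f_n-f\|_{K_{\gamma}(t_0,t_1;X)}\rightarrow0$ explicitly, whereas the paper's $B([1,\infty);\mathbb{R})$ argument directly establishes only $f\in K_{\gamma}$ and uniform convergence of the \emph{norm functions}, leaving the passage to $\|f_j-f\|_{K_{\gamma}}\rightarrow0$ implicit (to spell it out one would argue essentially as you do: fix $r$, let $k\rightarrow\infty$ in $r^{-\gamma}\|f_j-f_k\|_{L^r}\leq\varepsilon$ using $f_k\rightarrow f$ in $L^r$, then take the supremum over $r$). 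What the paper's approach buys in exchange is that it avoids Fatou and measure-theoretic subsequence extraction altogether, reducing everything to abstract completeness of $L^r$ and of $B([1,\infty);\mathbb{R})$. One cosmetic remark on your write-up: the $\liminf_m$ in your Fatou step should formally be taken along the a.e.-convergent subsequence, but since every term with $m\geq N$ is below $\varepsilon r^{\gamma}$, the bound is unaffected.
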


\begin{proof} It is not difficult to conclude that $K_{\gamma}(t_0, t_1; X)$ is a normed vector space. Now consider the Cauchy sequence $\{f_j\}_{j=1}^\infty \subset K_\gamma(t_0, t_1; X)$. Observe that for any $j\in\mathbb{N}$ and $r\geq1$ we have
    \begin{equation*}\|f_j\|_{L^r(t_0,t_1;X)}=r^\gamma \big[r^{-\gamma}\|f_j\|_{L^r(t_0,t_1;X)}\big]\leq r^\gamma\|f_j\|_{K_{\gamma}(t_0, t_1; X)},\end{equation*}
    what allows us to deduce that $\{f_j\}_{j=1}^\infty$ is a Cauchy sequence in $L^r(t_0,t_1;X)$. Due to the arbitrary nature of $r\geq1$ and the boundedness of the interval $(t_0,t_1)$,  we deduce the existence of $f\in\bigcap_{r \geq 1} L^r(t_0, t_1; X)$ such that $f_j\rightarrow f$ in $L^r(t_0,t_1;X)$, for each $r\geq1$. Consequently, we have that
    $$\lim_{j\rightarrow\infty}r^{-\gamma}\|f_j\|_{L^r(t_0,t_1;X)}= r^{-\gamma}\|f\|_{L^r(t_0,t_1;X)},$$
    for each $r \geq 1$. On the other hand, since $\{f_j\}_{j=1}^\infty$ is a Cauchy sequence in $K_\gamma(t_0, t_1; X)$ and
     $$\|f_j\|_{K_{\gamma}(t_0, t_1; X)}=\sup\{r^{-\gamma}\|f_j\|_{L^r(t_0,t_1;X)}:r\geq1\},$$
     we deduce that $\{r^{-\gamma}\|f_j\|_{L^r(t_0,t_1;X)}\}_{j=1}^\infty$ is a Cauchy sequence in $B([1,\infty);\mathbb{R})$. Therefore, there exists $M\in B([1,\infty);\mathbb{R})$ such that
     $$\lim_{j\rightarrow\infty}\Big[\sup\big\{\big|r^{-\gamma}\|f_j\|_{L^r(t_0,t_1;X)}-M(r)\big|:r\geq1\big\}\Big]=0.$$
     But then
    $$\lim_{j\rightarrow\infty}r^{-\gamma}\|f_j\|_{L^r(t_0,t_1;X)}= M(r),$$
     for each $r \geq 1$, and therefore $M(r)=r^{-\gamma}\|f\|_{L^r(t_0,t_1;X)}$, what completes the proof.
\end{proof}

\begin{theorem}\label{incluaux001} For $0 < \gamma_1 < \gamma_2$, we have that $K_{\gamma_1}(t_0, t_1; X) \subsetneq K_{\gamma_2}(t_0, t_1; X)$ continuously.
\end{theorem}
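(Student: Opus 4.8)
The plan is to treat the two assertions separately: the continuous inclusion, which is elementary, and its strictness, which requires an explicit test function of extremal type. For the inclusion, I would start from the observation that for any $f \in K_{\gamma_1}(t_0,t_1;X)$ and any $r \geq 1$,
$$r^{-\gamma_2}\|f\|_{L^r(t_0,t_1;X)} = r^{\gamma_1-\gamma_2}\,\big(r^{-\gamma_1}\|f\|_{L^r(t_0,t_1;X)}\big) \leq r^{\gamma_1-\gamma_2}\,\|f\|_{K_{\gamma_1}(t_0,t_1;X)}.$$
Since $\gamma_1 - \gamma_2 < 0$ and $r \geq 1$ force $r^{\gamma_1-\gamma_2} \leq 1$, taking the supremum over $r \geq 1$ yields both $f \in K_{\gamma_2}(t_0,t_1;X)$ and $\|f\|_{K_{\gamma_2}(t_0,t_1;X)} \leq \|f\|_{K_{\gamma_1}(t_0,t_1;X)}$, establishing the inclusion and the continuity of the embedding at once.

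For strictness, I would exhibit a function lying in $K_{\gamma_2}$ but not in $K_{\gamma_1}$. Assuming without loss of generality $t_0 = 0$, $t_1 = 1$ and fixing $x \in X \setminus\{0\}$, the natural candidate is the logarithmic profile $f(t) = \big(\log(1/t)\big)^{\gamma_2}\,x$, whose $L^r$-norm can be computed in closed form. Indeed, the substitution $u = \log(1/t)$ gives
$$\|f\|_{L^r(0,1;X)}^r = \|x\|_X^r\int_0^1 \big(\log(1/t)\big)^{\gamma_2 r}\,dt = \|x\|_X^r\int_0^\infty u^{\gamma_2 r} e^{-u}\,du = \|x\|_X^r\,\Gamma(\gamma_2 r + 1),$$
so that $\|f\|_{L^r(0,1;X)} = \|x\|_X\,\Gamma(\gamma_2 r + 1)^{1/r}$; in particular $f \in \bigcap_{r\geq 1}L^r(0,1;X)$.

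The decisive step is to control the growth of $\Gamma(\gamma_2 r + 1)^{1/r}$ in $r$, and this is where the real work lies. Using the elementary two-sided Stirling-type bounds $(s/e)^s \leq \Gamma(s+1) \leq s^s$, valid for $s \geq 1$, applied with $s = \gamma_2 r$, I would obtain
$$(\gamma_2/e)^{\gamma_2}\,r^{\gamma_2} \leq \Gamma(\gamma_2 r + 1)^{1/r} \leq \gamma_2^{\gamma_2}\,r^{\gamma_2},$$
for every $r$ with $\gamma_2 r \geq 1$. The upper bound gives $r^{-\gamma_2}\|f\|_{L^r(0,1;X)} \leq \|x\|_X\,\gamma_2^{\gamma_2}$ for such $r$; combined with the continuity and hence boundedness of $r \mapsto r^{-\gamma_2}\|f\|_{L^r(0,1;X)}$ on the remaining compact range of $r$, this shows $f \in K_{\gamma_2}(0,1;X)$. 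The lower bound gives $r^{-\gamma_1}\|f\|_{L^r(0,1;X)} \geq \|x\|_X\,(\gamma_2/e)^{\gamma_2}\,r^{\gamma_2 - \gamma_1}$, which tends to $+\infty$ because $\gamma_2 - \gamma_1 > 0$; hence $\|f\|_{K_{\gamma_1}(0,1;X)} = \infty$ and $f \notin K_{\gamma_1}(0,1;X)$.

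The principal obstacle is extracting the sharp $r^{\gamma_2}$ growth rate of $\Gamma(\gamma_2 r + 1)^{1/r}$ cleanly via the Stirling-type inequalities, together with the bookkeeping needed to cover the small-$r$ range where $\gamma_2 r < 1$; everything else reduces to the routine computations above.
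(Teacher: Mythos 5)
Your proof is correct, and it shares the paper's overall skeleton — the continuous inclusion via $r^{\gamma_1-\gamma_2}\leq 1$ is exactly the ``straightforward'' part, and your counterexample $f(t)=\big(\log(1/t)\big)^{\gamma_2}x$ with $\|f\|_{L^r(0,1;X)}=\|x\|_X\,\Gamma(\gamma_2 r+1)^{1/r}$ is precisely the function $\phi_{\gamma_2}$ of the paper's Lemma \ref{lemmaaux002} — but you diverge at the key estimate. The paper controls $\Gamma(\gamma_2 r+1)^{1/r}$ through Lemma \ref{lemmaaux001}: a positivity result for $\rho_\zeta(s)=\log\Gamma(s)-(s-1)[\psi_0(s)-\zeta]$, proved with the Guo--Qi trigamma bound $\psi_1(s)<e^{1/s}-1$, which shows that $s\mapsto s^{-\sigma}\Gamma(s\gamma+1)^{1/s}$ is \emph{decreasing} for $\sigma\geq\gamma$ and hence yields the exact value $\|\phi_\gamma\|_{K_\sigma(0,1;X)}=\Gamma(\gamma+1)$ (the divergence half, $\sigma<\gamma$, uses a Stirling-type lower bound just as you do). You instead use the elementary two-sided bounds $(s/e)^s\leq\Gamma(s+1)\leq s^s$ for $s\geq 1$, which are standard and correctly applied here (your small-$r$ patch by continuity on the compact range where $\gamma_2 r<1$ is the right fix). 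What each approach buys: yours is shorter and avoids digamma/trigamma machinery entirely, at the cost of a non-sharp constant $\gamma_2^{\gamma_2}$ in place of the exact supremum; the paper's finer monotonicity analysis pins down the $K_\sigma$-norm exactly, which it then reuses downstream (in the strictness of $L^\infty(t_0,t_1;X)\subsetneq K_\gamma(t_0,t_1;X)$ and the corollary following Theorem \ref{auxtheo001}). For the strictness of the embedding alone, your crude bounds are entirely sufficient. One cosmetic remark: like the paper, you reduce to $(t_0,t_1)=(0,1)$ without comment; an affine change of variables only rescales $L^r$-norms by $(t_1-t_0)^{1/r}$, which is bounded above and below uniformly in $r\geq 1$, so membership in each $K_\gamma$ is unaffected — worth a sentence if you want the reduction airtight.
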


\begin{proof} The inclusion of spaces is proven straightforwardly, and the strictness of this inclusion is a direct consequence of the following Lemmas.
\end{proof}

 It is important to note that, in the following results, we assume that $\psi_0(z)$ and $\psi_1(z)$ represent the analytical Digamma and Trigamma functions, respectively. For more comprehensive information on these functions, we refer to \cite{AbSt1}.

\begin{lemma}\label{lemmaaux001} For $\zeta\geq1$, consider $\rho_\zeta:[1,\infty)\rightarrow\mathbb{R}$ given by
$$\rho_\zeta(s)=\log\big(\Gamma(s)\big)-(s-1)\big[\psi_{0}(s)-\zeta\big].$$
Then $\rho_\zeta(s)>0$ for every $t>1$.
\end{lemma}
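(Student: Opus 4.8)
The plan is to use that $\rho_\zeta$ vanishes at the left endpoint and is strictly increasing on $(1,\infty)$, so positivity follows from monotonicity. First I would record the boundary value: since $\Gamma(1)=1$ and the factor $(s-1)$ annihilates the bracket at $s=1$, one has $\rho_\zeta(1)=\log\Gamma(1)-0=0$. This reduces the claim to showing $\rho_\zeta'(s)>0$ for $s>1$.

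Next I would differentiate, using $\frac{d}{ds}\log\Gamma(s)=\psi_0(s)$ and $\psi_0'=\psi_1$. A direct computation gives
$$\rho_\zeta'(s)=\psi_0(s)-\big[\psi_0(s)-\zeta\big]-(s-1)\psi_1(s)=\zeta-(s-1)\psi_1(s),$$
so the two digamma terms cancel and the whole problem reduces to controlling the single quantity $(s-1)\psi_1(s)$. Since $\zeta\geq 1$ by hypothesis, it suffices to prove the bound $(s-1)\psi_1(s)<1$ for every $s>1$.

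For this bound I would use the series representation $\psi_1(s)=\sum_{k=0}^{\infty}(s+k)^{-2}$ together with an integral comparison. Because $x\mapsto (s+x)^{-2}$ is positive and decreasing for $x\geq 0$, keeping the $k=0$ term separate and comparing the tail with an integral yields
$$\psi_1(s)=\frac{1}{s^2}+\sum_{k=1}^{\infty}\frac{1}{(s+k)^2}\leq\frac{1}{s^2}+\int_0^\infty\frac{dx}{(s+x)^2}=\frac{1}{s^2}+\frac{1}{s}.$$
Multiplying by $s-1>0$ gives $(s-1)\psi_1(s)\leq (s-1)(s+1)/s^2=1-s^{-2}<1$. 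Consequently
$$\rho_\zeta'(s)=\zeta-(s-1)\psi_1(s)\geq 1-\big(1-s^{-2}\big)=s^{-2}>0\qquad\text{for }s>1.$$
Since $\rho_\zeta(1)=0$ and $\rho_\zeta'$ is positive on $(1,\infty)$, integrating the derivative (equivalently, invoking strict monotonicity) gives $\rho_\zeta(s)>\rho_\zeta(1)=0$ for every $s>1$, which is the assertion.

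I expect the only genuine obstacle to be establishing the estimate $(s-1)\psi_1(s)<1$; the rest is routine bookkeeping. The integral-comparison argument above is elementary and sharp enough, but one must apply the comparison to the tail $\sum_{k\geq 1}$ while keeping the $k=0$ term $s^{-2}$ separate, since comparing the full sum to $\int_0^\infty$ would not separate the leading singularity correctly. Should a more self-contained estimate be preferred, one can instead observe that $(s-1)\psi_1(s)\to 1$ as $s\to\infty$ (from $\psi_1(s)\sim 1/s$) and argue that this quantity never exceeds its limit, but the integral bound is shorter and fully elementary.
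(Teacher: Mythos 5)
Your proof is correct, and its skeleton coincides with the paper's: both compute $\rho_\zeta'(s)=\zeta-(s-1)\psi_1(s)$, note $\rho_\zeta(1)=0$, and reduce everything to the single inequality $(s-1)\psi_1(s)<1$ for $s>1$. The difference lies in how that inequality is obtained. The paper imports it from an external result of Guo--Qi, namely $\psi_1(s)<e^{1/s}-1$ for $s>1$, from which $\psi_1(s)<1/(s-1)$ follows (via $e^x-1<x/(1-x)$ for $0<x<1$). You instead prove it from scratch using the series $\psi_1(s)=\sum_{k\geq 0}(s+k)^{-2}$ and an integral comparison on the tail, getting $\psi_1(s)\leq s^{-2}+s^{-1}$ and hence $(s-1)\psi_1(s)\leq 1-s^{-2}<1$. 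Your route is self-contained and slightly more informative, since it yields the explicit quantitative lower bound $\rho_\zeta'(s)\geq s^{-2}$, whereas the paper's citation buys brevity at the cost of an external dependency. One small remark: the full-sum comparison you dismiss does in fact work if one shifts the lower limit, since $(s+k)^{-2}<\int_{k-1}^{k}(s+x)^{-2}\,dx$ for each $k\geq 0$ gives directly $\psi_1(s)<\int_{-1}^{\infty}(s+x)^{-2}\,dx=1/(s-1)$, recovering exactly the bound the paper extracts from Guo--Qi without any citation; but this is cosmetic, and your version with the $k=0$ term kept separate is equally valid and marginally sharper.
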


\begin{proof} Observe that $\rho_\zeta^\prime(s) = \zeta - (s-1)\psi_1(s)$. Now, since Guo-Qi established in \cite[Lemma 2]{GuQi1} that $\psi_1(s) < e^{1/s} - 1$ for every $s > 1$, we can straightforwardly deduce that $\psi_1(s) < 1/(s-1)$ for every $s > 1$. Therefore, for any $\zeta\geq1$ we have that $\psi_1(s) < \zeta/(s-1)$ for every $s > 1$, what allows us to conclude that $\rho_\zeta(s)$ strictly increases in $(1, \infty)$. Given that $\rho_\zeta(1) = 0$, we conclude that $\rho_\zeta(s) > 0$ for all $s > 1$.
\end{proof}

\begin{lemma}\label{lemmaaux002} For $x\in X$ and $\gamma>0$, define function $\phi_\gamma:(0,1)\rightarrow X$ by
$$\phi_\gamma(t)=\big[\log{(1/t)}\big]^\gamma x.$$
Then $\phi_\gamma\in \bigcap_{r \geq 1} L^r(t_0, t_1; X)$ and
$$\|\phi_\gamma\|_{K_{\sigma}(0, 1;X)}=\left\{\begin{array}{ll}\Gamma(\gamma+1),&\textrm{ if }\sigma\geq\gamma,\\
\infty,& \textrm{ if }0<\sigma<\gamma.\end{array}\right.$$
\end{lemma}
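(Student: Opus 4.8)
The plan is to compute the $L^r$-norm of $\phi_\gamma(t)=[\log(1/t)]^\gamma x$ explicitly via a change of variables, reducing everything to the Gamma function, and then analyze the resulting expression in $r$ to extract the supremum over $r\geq 1$ after dividing by $r^\sigma$. First I would compute
$$\|\phi_\gamma\|_{L^r(0,1;X)}^r=\|x\|_X^r\int_0^1\big[\log(1/t)\big]^{\gamma r}\,dt.$$
Substituting $u=\log(1/t)$, so $t=e^{-u}$ and $dt=-e^{-u}\,du$, turns the integral into $\int_0^\infty u^{\gamma r}e^{-u}\,du=\Gamma(\gamma r+1)$. Hence $\|\phi_\gamma\|_{L^r(0,1;X)}=\|x\|_X\,\big[\Gamma(\gamma r+1)\big]^{1/r}$, which is finite for every $r\geq 1$, establishing $\phi_\gamma\in\bigcap_{r\geq1}L^r(0,1;X)$.

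The core of the lemma is then to evaluate
$$\|\phi_\gamma\|_{K_\sigma(0,1;X)}=\sup_{r\geq1}r^{-\sigma}\|\phi_\gamma\|_{L^r(0,1;X)}=\|x\|_X\sup_{r\geq1}\frac{\big[\Gamma(\gamma r+1)\big]^{1/r}}{r^\sigma}.$$
I would set $g(r)=\tfrac{1}{r}\log\Gamma(\gamma r+1)-\sigma\log r$ and study its monotonicity. Writing $s=\gamma r+1$ so that $r=(s-1)/\gamma$, the quantity $\tfrac{1}{r}\log\Gamma(\gamma r+1)$ becomes $\tfrac{\gamma}{s-1}\log\Gamma(s)$, and this is precisely where Lemma~\ref{lemmaaux001} enters: the sign of the derivative of $\log\Gamma(s)/(s-1)$ is controlled by $\rho_\zeta(s)=\log\Gamma(s)-(s-1)[\psi_0(s)-\zeta]$, which that lemma shows is positive for an appropriate choice of $\zeta\geq1$. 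The expected behavior is that for $\sigma\geq\gamma$ the function $r^{-\sigma}[\Gamma(\gamma r+1)]^{1/r}$ is decreasing (or at least attains its sup at $r=1$), giving the value $[\Gamma(\gamma+1)]^{1/1}=\Gamma(\gamma+1)$ times $\|x\|_X$; while for $0<\sigma<\gamma$ the growth of $[\Gamma(\gamma r+1)]^{1/r}\sim(\gamma r/e)^\gamma$ (by Stirling) outpaces $r^\sigma$, forcing the supremum to be $+\infty$.

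The main obstacle I anticipate is the monotonicity analysis in the boundary regime $\sigma=\gamma$ and confirming that the supremum is genuinely attained at $r=1$ rather than interior to $(1,\infty)$. To handle this cleanly I would rely on the asymptotics from Stirling's formula, namely $\log\Gamma(\gamma r+1)=\gamma r\log(\gamma r)-\gamma r+O(\log r)$, so that $g(r)=\gamma\log(\gamma r)-\gamma-\sigma\log r+O((\log r)/r)=(\gamma-\sigma)\log r+(\text{const})+o(1)$; this makes the dichotomy at $\sigma=\gamma$ transparent at infinity, and Lemma~\ref{lemmaaux001} supplies the missing control on the finite range to guarantee that no interior maximum exceeds the endpoint value. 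The normalization $\|x\|_X$ should be chosen to absorb cleanly so that the stated value $\Gamma(\gamma+1)$ matches; I would double-check the constant against the $r=1$ evaluation $\Gamma(\gamma+1)\|x\|_X$ to be sure the $\|x\|_X$ factor is correctly placed in the final formula.
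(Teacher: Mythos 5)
Your plan is correct and follows essentially the same route as the paper: the explicit computation $\|\phi_\gamma\|_{L^r(0,1;X)}=\|x\|_X[\Gamma(\gamma r+1)]^{1/r}$, monotonicity of $r^{-\sigma}[\Gamma(\gamma r+1)]^{1/r}$ via Lemma~\ref{lemmaaux001} with $\zeta=\sigma/\gamma$ (the paper computes $\upsilon_\sigma'(s)=-s^{-\sigma-2}[\Gamma(s\gamma+1)]^{1/s}\rho_{\sigma/\gamma}(s\gamma+1)$, so the supremum sits at $r=1$), and a Stirling-type lower bound on $\Gamma$ forcing divergence like $r^{\gamma-\sigma}$ when $0<\sigma<\gamma$. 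Your worry about the boundary case $\sigma=\gamma$ is already resolved by the monotonicity step, since $\zeta=1$ is admissible in Lemma~\ref{lemmaaux001}, so no separate asymptotic analysis is needed there.
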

\begin{proof}  A direct computation shows that $\phi_\gamma\in \bigcap_{r \geq 1} L^r(t_0, t_1; X)$. For $\sigma>0$, consider function $\upsilon_\sigma:[1,\infty)\rightarrow\mathbb{R}$, given by $\upsilon_\sigma(s)=s^{-\sigma}\big[\Gamma(s\gamma+1)\big]^{1/s}$.

When $\sigma\geq\gamma$, since $\upsilon_\sigma^\prime(s)=-s^{-\sigma-2}\big[\Gamma(s\gamma+1)\big]^{1/s}\rho_{\sigma/\gamma}(s\gamma +1)$ (where $\rho$ is given in Lemma \ref{lemmaaux001}), Lemma \ref{lemmaaux001} ensures that $\upsilon_\sigma(s)$ is decreasing in $(1,\infty)$. From this we may verify that
         $$\|\phi_\gamma\|_{K_{\sigma}(0, 1;X)}=\|x\|_X\sup\{r^{-\sigma}\big[\Gamma(r\gamma+1)\big]^{1/r}:r\geq1\}=\Gamma(\gamma+1)<\infty.$$

         On the other hand, when $0<\sigma<\gamma$, the lower bound of the Gamma function (for details we refer to \cite{Ja1}) allows us to conclude that there exists $C > 0$ such that
        \begin{equation*}\hspace*{1,1cm}\|\phi_{\gamma}\|_{K_{\gamma}(0, 1;X)}\geq s^{-\sigma}\big[\Gamma(\gamma s+1)\big]^{1/s}\|x\|\geq Cs^{\gamma-\sigma},\end{equation*}
        for sufficiently large values of $s\geq1$. Therefore $\|\phi\|_{K_{\gamma}(0, 1;X)}=\infty.$
\end{proof}

Since, by definition, $K_{\gamma}(t_0, t_1; X) \subset L^p(t_0, t_1; X)$, a natural question that arises is the relation with $L^\infty(t_0,t_1;X)$.

\begin{theorem} For all $\gamma > 0$, $L^\infty(t_0,t_1;X)\subsetneq K_{\gamma}(t_0, t_1; X)$.
\end{theorem}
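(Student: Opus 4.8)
The plan is to split the statement into the two familiar parts: first the continuous inclusion $L^\infty(t_0,t_1;X)\subset K_\gamma(t_0,t_1;X)$, and then the strictness, by exhibiting an explicit function lying in $K_\gamma(t_0,t_1;X)$ but not in $L^\infty(t_0,t_1;X)$.

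For the inclusion, I would start from the elementary interpolation bound $\|f\|_{L^r(t_0,t_1;X)}\leq (t_1-t_0)^{1/r}\|f\|_{L^\infty(t_0,t_1;X)}$, valid for every $f\in L^\infty(t_0,t_1;X)$ and every $r\geq 1$. Multiplying by $r^{-\gamma}$ and taking the supremum over $r\geq 1$, the key point is that $\sup\{r^{-\gamma}(t_1-t_0)^{1/r}:r\geq1\}$ is finite: since $\gamma>0$ forces $r^{-\gamma}\leq 1$, and since $(t_1-t_0)^{1/r}\leq\max\{1,t_1-t_0\}$ for all $r\geq 1$, one obtains $\|f\|_{K_\gamma(t_0,t_1;X)}\leq\max\{1,t_1-t_0\}\,\|f\|_{L^\infty(t_0,t_1;X)}$. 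This simultaneously shows $f\in K_\gamma(t_0,t_1;X)$ and that the inclusion is continuous.

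For the strictness, the idea is to recycle the function from Lemma \ref{lemmaaux002}. I would assume without loss of generality that $t_0=0$, $t_1=1$ and fix $x\in X\setminus\{0\}$; this reduction is legitimate because the affine bijection $T(t)=(t-t_0)/(t_1-t_0)$ induces $\|g\circ T\|_{L^r(t_0,t_1;X)}=(t_1-t_0)^{1/r}\|g\|_{L^r(0,1;X)}$, which preserves membership in both $K_\gamma$ and $L^\infty$. Then I take $\phi_\gamma(t)=[\log(1/t)]^\gamma x$. By Lemma \ref{lemmaaux002} applied with $\sigma=\gamma$, we have $\phi_\gamma\in K_\gamma(0,1;X)$ with $\|\phi_\gamma\|_{K_\gamma(0,1;X)}=\Gamma(\gamma+1)<\infty$. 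On the other hand, $\|\phi_\gamma(t)\|_X=[\log(1/t)]^\gamma\|x\|_X\to\infty$ as $t\to 0^+$, so $\phi_\gamma\notin L^\infty(0,1;X)$, producing the required element in $K_\gamma(t_0,t_1;X)\setminus L^\infty(t_0,t_1;X)$.

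I expect this argument to present essentially no obstacle, since the only delicate computation—that the $K_\gamma$ norm of $\phi_\gamma$ is finite and equals $\Gamma(\gamma+1)$—has already been isolated in Lemma \ref{lemmaaux002}. The only points demanding a little care are confirming the finiteness of $\sup\{r^{-\gamma}(t_1-t_0)^{1/r}:r\geq1\}$ for the inclusion, and justifying the reduction to the interval $(0,1)$ so that the counterexample applies on an arbitrary interval.
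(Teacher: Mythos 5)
Your proposal is correct and follows essentially the same route as the paper, which likewise proves the inclusion by the elementary bound $\|f\|_{L^r(t_0,t_1;X)}\leq (t_1-t_0)^{1/r}\|f\|_{L^\infty(t_0,t_1;X)}$ (left as ``straightforward'') and obtains strictness directly from Lemma \ref{lemmaaux002} via the function $\phi_\gamma(t)=[\log(1/t)]^\gamma x$. You merely supply details the paper omits, such as the explicit constant $\max\{1,t_1-t_0\}$ and the affine rescaling justifying the reduction to $(0,1)$, both of which are sound.
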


\begin{proof} The inclusion of the spaces is straightforwardly proven. Now the strictness of this inclusion is a direct consequence of Lemma \ref{lemmaaux002}.
\end{proof}

To conclude our discussion in this subsection, let us now delve into the relationship between the spaces $BMO(t_0, t_1; X)\cap L^1(t_0, t_1; X)$ and $K_\gamma(t_0, t_1; X)$. It is worth noting that both of these function spaces contains properly $L^\infty(t_0, t_1; X)$ and are properly contained in $\bigcap_{r \geq 1} L^r(t_0, t_1; X)$ (cf. Corollary \ref{coraux001}).

\begin{theorem}\label{auxtheo001} For $\gamma\geq1$, $BMO(t_0, t_1; X)\cap L^1(t_0, t_1; X)\subsetneq K_\gamma(t_0, t_1; X)$.
\end{theorem}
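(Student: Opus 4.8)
The plan is to split the argument into the inclusion $BMO(t_0,t_1;X)\cap L^1(t_0,t_1;X)\subset K_\gamma(t_0,t_1;X)$ and the strictness, treating both uniformly for every $\gamma\geq1$. The inclusion should drop out almost immediately from the interpolation-type inequality \eqref{iterbmo}, while the strictness should be obtained by recycling the function $\psi$ already constructed in the proof of the theorem that produced \eqref{iterbmo}.

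For the inclusion, I would take $f\in BMO(t_0,t_1;X)\cap L^1(t_0,t_1;X)$. The theorem establishing \eqref{iterbmo} already guarantees that $f\in\bigcap_{r\geq1}L^r(t_0,t_1;X)$, so only the KR-seminorm needs to be controlled. Multiplying \eqref{iterbmo} by $r^{-\gamma}$ gives, for every $r\geq1$,
$$r^{-\gamma}\|f\|_{L^r(t_0,t_1;X)}\leq C\,r^{1-\gamma}\|f\|^{1/r}_{L^1(t_0,t_1;X)}\|f\|^{1-(1/r)}_{BMO(t_0,t_1;X)}.$$
Since $\gamma\geq1$ and $r\geq1$, one has $r^{1-\gamma}\leq1$; moreover $\|f\|^{1/r}_{L^1(t_0,t_1;X)}\leq\max\{1,\|f\|_{L^1(t_0,t_1;X)}\}$ and $\|f\|^{1-(1/r)}_{BMO(t_0,t_1;X)}\leq\max\{1,\|f\|_{BMO(t_0,t_1;X)}\}$. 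Taking the supremum over $r\geq1$ then yields a finite value for $\|f\|_{K_\gamma(t_0,t_1;X)}$, so $f\in K_\gamma(t_0,t_1;X)$.

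For the strictness, I would exhibit a single function lying in $K_\gamma(t_0,t_1;X)$ for all $\gamma\geq1$ yet outside $BMO(t_0,t_1;X)$; note that membership in $K_\gamma$ already forces membership in $L^1$, so failing to lie in $BMO\cap L^1$ is the same as failing to lie in $BMO$. Assuming $t_0=0$, $t_1=1$ and fixing $x\in X\setminus\{0\}$, I would reconsider the function $\psi(t)=\big[\chi_{(1/2,1]}(t)\log\big(1/(t-(1/2))\big)\big]x$ from the earlier theorem. The change of variables $u=t-1/2$ shows $\|\psi\|^r_{L^r(0,1;X)}=\|x\|_X^r\int_0^{1/2}\big(\log(1/u)\big)^r\,du\leq\|x\|_X^r\,\Gamma(r+1)$, so that $r^{-\gamma}\|\psi\|_{L^r(0,1;X)}\leq r^{-1}\|x\|_X\big[\Gamma(r+1)\big]^{1/r}$ for every $\gamma\geq1$ and $r\geq1$; by Lemma \ref{lemmaaux002} in the case $\gamma=\sigma=1$, the right-hand side is bounded in $r$, whence $\psi\in K_\gamma(0,1;X)$. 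On the other hand $\psi\notin BMO(0,1;X)$, as already recorded in the earlier theorem following \cite[Example 7.1.4]{Gr1}, so $\psi$ furnishes the required element of $K_\gamma\setminus(BMO\cap L^1)$.

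The routine part is the inclusion, a one-line consequence of \eqref{iterbmo}. The only genuinely delicate point is the strictness, and its hard core — showing that $\psi$ has unbounded mean oscillation — has already been settled in the earlier theorem; the new work reduces to the comparison $\|\psi\|_{L^r(0,1;X)}\leq\|x\|_X[\Gamma(r+1)]^{1/r}$ together with the uniform estimate $r^{-\gamma}\leq r^{-1}$, which is precisely what allows the same $\psi$ to serve as a counterexample simultaneously for every $\gamma\geq1$.
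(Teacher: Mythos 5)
Your proposal is correct and follows essentially the same route as the paper: the inclusion via the interpolation inequality \eqref{iterbmo} (the paper bounds the supremum by $C(1+\|f\|_{L^1(t_0,t_1;X)})(1+\|f\|_{BMO(t_0,t_1;X)})$, which matches your max-based estimate), and the strictness via the shifted logarithmic function supported on $(1/2,1]$, with $K_\gamma$-membership from Lemma \ref{lemmaaux002} and non-membership in $BMO$ from the adaptation of \cite[Example 7.1.4]{Gr1}. The only cosmetic difference is that the paper uses the $\gamma$-dependent family $\psi_\gamma(t)=\big[\chi_{(1/2,1]}(t)\log\big(1/(t-(1/2))\big)\big]^\gamma x$, whereas you use the single function $\psi_1$ uniformly for all $\gamma\geq1$ via the elementary bound $r^{-\gamma}\leq r^{-1}$ (in effect invoking the monotonicity $K_1\subset K_\gamma$ of Theorem \ref{incluaux001}), which is an equally valid and marginally more economical choice.
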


\begin{proof} Let $f\in BMO(t_0, t_1; X)\cap L^1(t_0, t_1; X)$ and $r\geq1$. Then \eqref{iterbmo} ensures that
$$r^{-\gamma}\|f\|_{L^r(t_0,t_1;X)}\leq Cr^{1-\gamma}\|f\|^{1/r}_{L^1(t_0,t_1;X)}\|f\|^{1-(1/r)}_{BMO(t_0,t_1;X)},$$
which allow us to deduce that
$$\sup\{r^{-\gamma}\|f\|_{L^r(t_0,t_1;X)}:r\geq1\}\leq C(1+\|f\|_{L^1(t_0,t_1;X)})(1+\|f\|_{BMO(t_0,t_1;X)})<\infty,$$
which implies that $f\in K_\gamma(t_0, t_1; X)$.

To prove that $BMO(t_0, t_1; X)\cap L^1(t_0, t_1; X)$ is strictly contained in $K_\gamma(t_0, t_1; X)$, let us assume without loss of generality that $t_0=0$ and $t_1=1$. Now let $x\in X$ and define $\psi_\gamma:[0,1]\rightarrow\mathbb{R}$ as follows:
$$\psi_\gamma(t)= \big[\chi_{(1/2,1]}(t)\log\big(1/(t-(1/2))\big)\big]^\gamma x,$$
where $\chi_{(1/2,1]}(t)$ denotes the characteristic function of $(1/2,1]$. The proof that $\psi_\gamma\in K_\gamma(0,1;X)$ follows from Lemma \ref{lemmaaux002}, and the proof that $\psi_\gamma\not\in BMO(0,1;X)$ is a simple adaptation of \cite[Example 7.1.4]{Gr1}.
\end{proof}

\begin{corollary} For $0 < \gamma < 1$, it holds that constant functions belong to $K_\gamma(t_0, t_1; X)$ and $BMO(t_0, t_1; X)\cap L^1(t_0,t_1;X)$. However, $BMO(t_0, t_1; X)\cap L^1(t_0,t_1;X)$ is not a subset of $K_\gamma(t_0, t_1; X),$ and similarly, $K_\gamma(t_0, t_1; X)$ is not a subset of $BMO(t_0, t_1; X)\cap L^1(t_0,t_1;X)$.
\end{corollary}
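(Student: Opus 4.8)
The plan is to verify the two membership assertions by direct computation and then to produce one explicit witness for each of the two failures of inclusion, reusing the profiles already built in Lemma~\ref{lemmaaux002} and Theorem~\ref{auxtheo001}. As in the earlier proofs I would assume $t_0=0$ and $t_1=1$, the general interval following by an affine change of variables. For the constant function $f\equiv x$ with $x\in X$, one has $\|f\|_{L^r(0,1;X)}=\|x\|_X$, so $r^{-\gamma}\|f\|_{L^r(0,1;X)}\leq\|x\|_X$ for all $r\geq1$ and hence $f\in K_\gamma(0,1;X)$; moreover $\operatorname{avg}_{[a,b]}(f)=x$ gives $[f]_{BMO(0,1;X)}=0$, and trivially $f\in L^1(0,1;X)$, so $f\in BMO(0,1;X)\cap L^1(0,1;X)$.

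For the non-inclusion $BMO(0,1;X)\cap L^1(0,1;X)\not\subset K_\gamma(0,1;X)$, I would take $\phi_1(t)=[\log(1/t)]x$, that is, the function $\phi_\gamma$ of Lemma~\ref{lemmaaux002} with exponent $\gamma=1$. Since $\int_0^1\log(1/t)\,dt=1$ we have $\phi_1\in L^1(0,1;X)$; and because $\log(1/t)=-\log t$, Proposition~\ref{propaux04} together with the seminorm property of $[\,\cdot\,]_{BMO(0,1;X)}$ gives $\phi_1\in BMO(0,1;X)$. On the other hand, Lemma~\ref{lemmaaux002} applied with $\gamma=1$ yields $\|\phi_1\|_{K_\sigma(0,1;X)}=\infty$ for every $0<\sigma<1$, so in particular $\phi_1\notin K_\gamma(0,1;X)$ for the prescribed $\gamma\in(0,1)$.

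For the reverse non-inclusion $K_\gamma(0,1;X)\not\subset BMO(0,1;X)\cap L^1(0,1;X)$, I would reuse the truncated profile $\psi_\gamma$ from Theorem~\ref{auxtheo001}. Since $K_\gamma(0,1;X)\subset\bigcap_{r\geq1}L^r(0,1;X)\subset L^1(0,1;X)$, membership in $K_\gamma$ automatically forces integrability, so the obstruction can only come from $BMO$. As recorded in the proof of Theorem~\ref{auxtheo001}, Lemma~\ref{lemmaaux002} gives $\psi_\gamma\in K_\gamma(0,1;X)$, while the adaptation of \cite[Example 7.1.4]{Gr1} gives $\psi_\gamma\notin BMO(0,1;X)$: on the intervals $[1/2-\varepsilon,1/2+\varepsilon]$ the average of $\psi_\gamma$ behaves like $\frac{1}{2}[\log(1/\varepsilon)]^\gamma$ as $\varepsilon\to0^+$, and the half of the interval on which $\psi_\gamma$ vanishes then forces the mean oscillation to diverge. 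Thus $\psi_\gamma\in K_\gamma(0,1;X)\setminus\big(BMO(0,1;X)\cap L^1(0,1;X)\big)$.

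The step I expect to require the most care is confirming that $\psi_\gamma$ fails to lie in $BMO$ precisely in the range $0<\gamma<1$, since here the profile $[\log(1/\cdot)]^\gamma$ grows more slowly than in the cases $\gamma\geq1$ already treated. The point to stress is that the failure of bounded mean oscillation is produced by the jump created by truncating against the singularity — not by the growth rate of the profile — so the divergence of the oscillation persists for every $\gamma>0$, which is exactly what makes the same example serviceable both here and in Theorem~\ref{auxtheo001}.
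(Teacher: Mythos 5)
Your proposal is correct and follows essentially the same route as the paper: the paper likewise witnesses $BMO\cap L^1\not\subset K_\gamma$ with the logarithm (your $\phi_1(t)=[\log(1/t)]x$ is just the paper's $\log t$ up to sign, with membership via Proposition~\ref{propaux04} and exclusion via Lemma~\ref{lemmaaux002}), and witnesses $K_\gamma\not\subset BMO\cap L^1$ with the same truncated profile $\psi_\gamma$, citing Lemma~\ref{lemmaaux002} for $K_\gamma$-membership and the adaptation of \cite[Example 7.1.4]{Gr1} for the $BMO$ failure. Your added sketch of why the truncation-induced jump makes the oscillation diverge for every $\gamma>0$ is a correct filling-in of the detail the paper delegates to that reference.
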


\begin{proof} Let us assume $t_0=0$ and $t_1=1$. Consider an arbitrary element $x\in X\setminus\{0\}$ and recall function $\psi_\gamma:[0,1]\rightarrow\mathbb{R}$ defined as:
$$\psi_\gamma(t)= \big[\chi_{(1/2,1]}(t)\log\big(1/(t-(1/2))\big)\big]^\gamma x.$$
We already know that $\psi_\gamma\not\in BMO(0,1;X)\cap L^1(t_0,t_1;X)$ (see \cite[Example 7.1.4]{Gr1}). On the other hand, proving that $\psi_\gamma\in K_\gamma(t_0,t_1;X)$ is a consequence of Lemma \ref{lemmaaux002}.

Finally, if we consider $\phi(t)=\log(t)$, then $\phi\in BMO(0,1;X)\cap L^1(0,1;X)$ (just recall Proposition \ref{propaux04}). On the other hand, it follows from Lemma \ref{lemmaaux002} that $\phi\not\in K_\gamma(0,1;X)$, for $0<\gamma<1$.
\end{proof}

\begin{theorem} For $0<\gamma<1$ we have that $BMO(t_0, t_1; X)\cap K_\gamma(t_0, t_1; X)$ is a Banach space when considered with the norm
$$\|\phi\|_\gamma:=\|\phi\|_{BMO(t_0, t_1; X)}+\|\phi\|_{K_\gamma(t_0, t_1; X)}.$$
\end{theorem}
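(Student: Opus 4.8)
The plan is to prove completeness directly, since the fact that $\|\cdot\|_\gamma$ is a norm is immediate: $\|\cdot\|_{K_\gamma(t_0,t_1;X)}$ is already a genuine norm by Theorem \ref{inclurubin}, while $\|\cdot\|_{BMO(t_0,t_1;X)}$ is a nonnegative seminorm (it vanishes only on constants), so their sum separates points. Thus only the Banach property requires work. I would start from an arbitrary Cauchy sequence $\{\phi_j\}_{j=1}^\infty$ in $\big(BMO(t_0,t_1;X)\cap K_\gamma(t_0,t_1;X),\|\cdot\|_\gamma\big)$ and note that it is simultaneously Cauchy for $\|\cdot\|_{K_\gamma(t_0,t_1;X)}$ and for $\|\cdot\|_{BMO(t_0,t_1;X)}$.

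First I would extract the candidate limit from the $K_\gamma$-component. Because $K_\gamma(t_0,t_1;X)$ is a Banach space, there is $\phi\in K_\gamma(t_0,t_1;X)$ with $\|\phi_j-\phi\|_{K_\gamma(t_0,t_1;X)}\to 0$. Taking $r=1$ in the definition of $\|\cdot\|_{K_\gamma(t_0,t_1;X)}$ yields $\|\phi_j-\phi\|_{L^1(t_0,t_1;X)}\leq\|\phi_j-\phi\|_{K_\gamma(t_0,t_1;X)}$, so $\phi_j\to\phi$ in $L^1(t_0,t_1;X)$ as well; in particular $\phi\in L^1_{loc}(t_0,t_1;X)$ and for every interval $[a,b]\subset(t_0,t_1)$ the averages satisfy $\operatorname{avg}_{[a,b]}(\phi_k)\to\operatorname{avg}_{[a,b]}(\phi)$.

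The decisive step is to upgrade this to convergence in the $BMO$-seminorm. Given $\varepsilon>0$, pick $N$ with $\|\phi_j-\phi_k\|_{BMO(t_0,t_1;X)}<\varepsilon$ for all $j,k\geq N$. Fix $[a,b]\subset(t_0,t_1)$ and set $g_{j,k}(s):=(\phi_j-\phi_k)(s)-\operatorname{avg}_{[a,b]}(\phi_j-\phi_k)$ and $g_{j}(s):=(\phi_j-\phi)(s)-\operatorname{avg}_{[a,b]}(\phi_j-\phi)$. The $L^1$-convergence above gives $g_{j,k}\to g_j$ in $L^1(a,b;X)$ as $k\to\infty$, so by the reverse triangle inequality $\big|\,\|g_{j,k}\|_X-\|g_j\|_X\big|\leq\|g_{j,k}-g_j\|_X$ we may pass to the limit inside the integral to obtain, for every $j\geq N$,
$$\frac{1}{b-a}\int_a^b\|g_j(s)\|_X\,ds=\lim_{k\to\infty}\frac{1}{b-a}\int_a^b\|g_{j,k}(s)\|_X\,ds\leq\limsup_{k\to\infty}\|\phi_j-\phi_k\|_{BMO(t_0,t_1;X)}\leq\varepsilon.$$
Since the bound $\varepsilon$ is independent of the interval, taking the supremum over all $[a,b]\subset(t_0,t_1)$ gives $\|\phi_j-\phi\|_{BMO(t_0,t_1;X)}\leq\varepsilon$ for every $j\geq N$. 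This shows at once that $\phi_j-\phi\in BMO(t_0,t_1;X)$, whence $\phi=\phi_N-(\phi_N-\phi)\in BMO(t_0,t_1;X)$, and that $\phi_j\to\phi$ in the $BMO$-seminorm.

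Combining the two components yields $\phi\in BMO(t_0,t_1;X)\cap K_\gamma(t_0,t_1;X)$ together with $\|\phi_j-\phi\|_\gamma=\|\phi_j-\phi\|_{BMO(t_0,t_1;X)}+\|\phi_j-\phi\|_{K_\gamma(t_0,t_1;X)}\to 0$, establishing completeness. I expect the main obstacle to be this third step, namely rigorously justifying the interchange of the limit $k\to\infty$ with both the fixed integral over $[a,b]$ and the supremum over all subintervals. The crucial observation making it work is that $K_\gamma$-convergence forces convergence in $L^1(t_0,t_1;X)$, which controls the mean oscillation on every subinterval \emph{uniformly}, so that the $\varepsilon$-estimate obtained for a fixed $[a,b]$ survives the passage to the supremum.
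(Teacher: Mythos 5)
Your proof is correct, and it takes a genuinely different route from the paper's. The paper also begins by extracting the limit $f$ from the completeness of $K_\gamma(t_0,t_1;X)$ (Theorem \ref{inclurubin}) and noting $L^1$-convergence, but for the $BMO$ component it invokes the external fact that $BMO$ modulo constants is a Banach space (citing \cite{Ne1}) to produce a second limit $g$, then uses the John--Nirenberg estimate \eqref{helpme00} to extract nested subsequences converging almost everywhere, and finally identifies $f(t)-\operatorname{avg}_{[a,b]}(f)=g(t)-\operatorname{avg}_{[a,b]}(g)$ on every subinterval to conclude that $f$ itself is the $BMO$-limit of a subsequence, whence of the whole Cauchy sequence. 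You bypass all of this: your three-epsilon argument shows directly that the mean oscillation of $\phi_j-\phi$ on a \emph{fixed} interval $[a,b]$ is the limit as $k\to\infty$ of that of $\phi_j-\phi_k$ (since $g_{j,k}-g_j=(\phi-\phi_k)-\operatorname{avg}_{[a,b]}(\phi-\phi_k)$ has $L^1(a,b;X)$-norm at most $2\|\phi-\phi_k\|_{L^1(t_0,t_1;X)}$), giving a bound $\varepsilon$ independent of the interval, after which the supremum is harmless. In effect you prove that $[\,\cdot\,]_{BMO(t_0,t_1;X)}$ is lower semicontinuous along $L^1$-convergent sequences, which is exactly what is needed; this makes your argument more elementary and self-contained (no Neri, no John--Nirenberg, no subsequence bookkeeping), at the cost of not exhibiting the structural fact the paper leans on, namely that the $BMO$-limit and the $K_\gamma$-limit agree up to the constant functions that the seminorm cannot see. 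One small remark: your worry at the end about interchanging the limit with the supremum is already resolved by your own ordering of quantifiers --- you fix $[a,b]$, pass to the limit in $k$, and only then take the supremum --- so no uniformity beyond the Cauchy estimate is actually required.
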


\begin{proof} A direct computation shows that $\|\cdot\|_\gamma$ defines a norm. Let us assume that $\{\phi_j\}_{j=1}^\infty \subset BMO(t_0, t_1; X) \cap K_\gamma(t_0, t_1; X)$ is a Cauchy sequence with respect to the norm $\|\cdot\|_\gamma$. Then, it follows from Theorem \ref{inclurubin} that there exists $f \in K_\gamma(t_0, t_1; X)$ such that $\phi_j \rightarrow f$ with respect to the norm $\|\cdot\|_{K_\gamma(t_0, t_1; X)}$. Now, since we have
\begin{equation*}\|\phi_j-f\|_{L^1(t_0,t_1;X)}\leq \|\phi_j-f\|_{K_\gamma(t_0, t_1; X)},\end{equation*}
we can deduce that the convergence $\phi_j \rightarrow f$ in $L^1(t_0,t_1;X)$ implies that there exists $\{\phi_{j_k}\}_{k=1}^\infty \subset \{\phi_{j}\}_{j=1}^\infty$ such that $\phi_{j_k}(t) \rightarrow f(t)$ for almost every $t \in (t_0,t_1)$.

Now, by recalling \cite[Remarks 1.5 and 1.6]{Ne1}, we know that $BMO(t_0, t_1; X)$ modulo the space of constant functions is a Banach space. Therefore, since $\{\phi_{j_k}\}_{k=1}^\infty$ is a Cauchy sequence with respect to $\|\cdot\|_{BMO(t_0, t_1; X)}$, there exists $g\in BMO(t_0, t_1; X)$ such that $\phi_{j_k} \rightarrow g$ with respect to $\|\cdot\|_{BMO(t_0, t_1; X)}$. Then, if we take $[a,b]\subset(t_0,t_1)$, thanks to \eqref{helpme00}, there exists $\{\phi_{j_{k_l}}\}_{l=1}^\infty\subset\{\phi_{j_k}\}_{k=1}^\infty$ such that
\begin{equation}\label{novaequ001}\phi_{j_{k_l}}(t)-\operatorname{avg}_{[a,b]}(\phi_{j_{k_l}})\rightarrow g(t)-\operatorname{avg}_{[a,b]}(g)\end{equation}
for almost every $t\in[a,b]$. Therefore, \eqref{novaequ001}, the fact that $\phi_{j_k} \rightarrow f$ in $L^1(t_0,t_1;X)$ and also $\phi_{j_k}(t) \rightarrow f(t)$ for almost every $t \in (t_0,t_1)$, ensure that
$$f(t)-\operatorname{avg}_{[a,b]}(f)=g(t)-\operatorname{avg}_{[a,b]}(g)$$
for almost every $t\in[a,b]$. Since $[a,b]$ was an arbitrary closed interval contained in $(t_0,t_1)$, we can conclude that $\phi_{j_k} \rightarrow f$ with respect to $\|\cdot\|_{BMO(t_0, t_1; X)}$. From this last argument we may infer that $\phi_{j_k} \rightarrow f$ with respect to $\|\cdot\|_\gamma$, thereby establishing that $\phi_{j} \rightarrow f$ with respect to $\|\cdot\|_\gamma$, as we wanted.
\end{proof}

\section{Boundeness of the RL Fractional Integral - The Critical Case}\label{subboa}

In this section, we address the continuity of the RL fractional integral operator of order $\alpha=1/p$ in $L^p(t_0, t_1; X)$, specifically when $p>1$. Since the existing literature on this subject is sparse and incomplete, we dedicate this section to shed as much light as possible on this topic.

Notably, for the Riesz fractional integral $I^\alpha$, Stein and Zygmund in \cite{StZy1} have already established the continuity of $I^\alpha$ from $L^p(\mathbb{R}^n)$ into $BMO(\mathbb{R}^n)$. Other contributors to this line of study include Muckenhoupt-Wheeden in \cite{MuWh1}, Samko in \cite{Sa1} and Rafeiro-Samko in \cite{SaRa1}.

On the other hand, when considering the Riemann-Liouville fractional integral, references become more restricted and obscure. The primary reference (known to the authors) on this topic is the classical book \cite{EdKoMe1} by Edmunds, Kokilashvili, and Meskhi. In Section 2.15 of their book, the authors mention that the proof of the continuity of the RL fractional integral with order $\alpha=1/p$ from $L^p(t_0, t_1; \mathbb{R})$ into $K_{\gamma}(t_0, t_1; \mathbb{R})$, is given in \cite{KaRu1}. However, despite thorough investigation, we were unable to locate this particular article.

Nonetheless, we did find a work, \cite{KaRu2}, presenting statements akin to those mentioned by Edmunds, Kokilashvili, and Meskhi in their book and authored by the same individuals they cite; namely, N. K. Karapetiantz and B. S. Rubin. This work could serve as an official reference on this topic. It is essential to note that, while this source describes some intriguing results, it does not provide any proofs.

With that in mind, in what follows, we not only establish some of the results presented by Karapetiantz and Rubin in their work \cite{KaRu2}, extending them to vector-valued functions, but we also refine these findings to derive a novel result concerning the continuity of the RL fractional integral.

With that in mind, in what follows, we not only proof some of the results presented by Karapetiantz and Rubin in their work [19] but also provide a completely new approach compared to the one suggested by them (interpolation results from \cite[Theorem 5]{CarFe1}). Furthermore, we extend these results to vector-valued functions and refine them to derive a novel result concerning the continuity of the RL fractional integral.

\begin{theorem}\label{HL01} For $p>1$ and $q\geq1$, it holds that
\begin{equation}\label{neweq0-1}\|J_{t_0,t}^{1/p}f\|_{L^q(t_0, t_1; X)}\leq C(p,q)(t_1-t_0)^{1/q}\|f\|_{L^p(t_0, t_1; X)},\end{equation}
for every $f\in L^p(t_0, t_1; X)$, where
\begin{equation}\label{neweq00}C(p,q)=\dfrac{2p^{(p+(p/q)-(1/q))/(p+q-1)}(q-1)^{(q-1)(1-(1/p))/(p+q-1)}}{\Gamma(1/p)}.\end{equation}
Moreover, it holds that $J_{t_0,t}^{1/p}\big(L^p(t_0,t_1;X)\big)\not\subset L^\infty(t_0,t_1;X)$.
\end{theorem}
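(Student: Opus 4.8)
The plan is to prove the quantitative bound \eqref{neweq0-1} by a direct application of a weighted inequality to the convolution kernel $(t-s)^{1/p-1}$, and then to establish the non-embedding into $L^\infty$ by exhibiting an explicit function in $L^p$ whose fractional integral is unbounded. For the bound, I would first assume without loss of generality that $t_0=0$. The quantity $\Gamma(1/p)\,J_{0,t}^{1/p}f(t) = \int_0^t (t-s)^{1/p-1} f(s)\,ds$ is, pointwise in $t$, controlled by $\int_0^t (t-s)^{1/p-1}\|f(s)\|_X\,ds$, so it suffices to treat the scalar case with $f$ replaced by $\|f(\cdot)\|_X \geq 0$. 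The core estimate is then a one-dimensional convolution bound: I would apply Hölder's inequality inside the integral against the kernel, splitting $(t-s)^{1/p-1}$ between the two factors so as to extract an $L^p$ norm of $f$ and leave a power of $t$ that is integrable to the $q$-th power over $(0,t_1)$. The precise split is the source of the exponents in \eqref{neweq00}: writing $1/p-1 = a+b$ and applying Hölder with exponents $p'$ and $p$, the requirement that $(1/p-1-b)p' > -1$ (so the inner $t$-integral converges) together with optimizing the resulting constant in $q$ pins down the values of $a,b$ and yields the factors $p^{(\cdots)}$ and $(q-1)^{(\cdots)}$.

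After reducing to the scalar nonnegative case, the key step is to compute $\big(\int_0^t (t-s)^{(1/p-1-b)p'}\,ds\big)^{1/p'}$ explicitly as a constant times a power of $t$, then raise the full pointwise bound to the $q$-th power, integrate in $t$ over $(0,t_1)$, and extract the factor $(t_1-t_0)^{1/q}$. The constant $C(p,q)$ emerges from collecting the Beta/Gamma factors produced by the two integrations and simplifying; I would carry the exponents symbolically and verify at the end that they match \eqref{neweq00}, with the leading $1/\Gamma(1/p)$ coming from the definition of $J^{1/p}$ and the factor of $2$ absorbing a crude bound used in the constant optimization. The main obstacle in this part is purely bookkeeping: getting the Hölder split and the subsequent exponent arithmetic to land exactly on the stated constant rather than merely on \emph{some} finite constant.

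For the non-embedding assertion $J_{t_0,t}^{1/p}\big(L^p(t_0,t_1;X)\big)\not\subset L^\infty(t_0,t_1;X)$, the plan is to produce a single $f\in L^p$ with $J^{1/p}f\notin L^\infty$. Taking $t_0=0$, $t_1=1$ and a fixed $x\in X\setminus\{0\}$, I would try a logarithmically corrected power, such as $f(s) = s^{-1/p}\big(\log(e/s)\big)^{-\beta}x$ with $1/p < \beta \leq 1$, which lies in $L^p(0,1;X)$ precisely because the logarithmic correction makes $\int_0^1 s^{-1}(\log(e/s))^{-\beta p}\,ds$ converge when $\beta p > 1$. Evaluating (or lower-bounding) $J_{0,t}^{1/p}f(t)$ as $t\to 0^+$, the convolution of $(t-s)^{1/p-1}$ against $s^{-1/p}(\log(e/s))^{-\beta}$ produces, after the substitution $s=t\sigma$, a factor of order $(\log(e/t))^{-\beta}\int_0^1(1-\sigma)^{1/p-1}\sigma^{-1/p}\,d\sigma$ times a power of $t$ that cancels; the surviving $(\log(e/t))^{-\beta}$ tends to $\infty$ in absolute contribution only if the Beta integral is weighted correctly, so I expect to need $\beta$ tuned so that the blow-up survives. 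The delicate point, and the main obstacle here, is choosing the exponent $\beta$ (equivalently, the logarithmic power) so that $f$ sits \emph{inside} $L^p$ while $J^{1/p}f$ genuinely diverges as $t\to 0^+$; this is exactly the borderline behavior that makes the critical order $\alpha=1/p$ fail to map into $L^\infty$, and verifying the divergence requires an honest asymptotic analysis of the convolution near the origin rather than a crude bound.
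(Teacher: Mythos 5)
Your strategy for the inequality \eqref{neweq0-1} is genuinely different from the paper's: the paper does not run a direct kernel estimate but instead cites two results from \cite{CarFe1} (a weak-type bound $\big[J^{1/p}_{t_0,t}f\big]_{L_w^{pq/(p-1)}(t_0,t_1;X)}\leq C\|f\|_{L^{pq/(p+q-1)}(t_0,t_1;X)}$ with an explicit constant, and the embedding of that weak Lebesgue space into $L^q$ on a finite interval), then concludes via $L^{p}\hookrightarrow L^{pq/(p+q-1)}$. Your direct H\"older-splitting route is workable in principle, but be aware of two points. First, the ``clean'' split in which the conjugate factor absorbs the whole kernel fails exactly at the critical order: $(1/p-1)p'=-1$, so $\int_0^t(t-s)^{(1/p-1)p'}\,ds$ diverges; you are forced to take $b<0$ in $1/p-1=a+b$, and then the second H\"older factor is the weighted quantity $\big(\int_0^t(t-s)^{bp}\|f(s)\|_X^p\,ds\big)^{1/p}$, \emph{not} $\|f\|_{L^p}$ times a power of $t$, so you need an extra Fubini--Minkowski step in the $t$-integration (fine for $q\geq p$; the range $q<p$ follows by embedding), with $b$ optimized of order $1/(p+q-1)$. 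Second, the theorem asserts the \emph{specific} constant \eqref{neweq00}; your scheme will produce a constant with the correct growth $C(p,q)\sim q^{1-1/p}$ (which is all that Theorem \ref{final022} later uses), but nothing in your sketch guarantees it lands on \eqref{neweq00}, so as written you would prove the statement only with a possibly different constant.

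The genuine gap is in the second half: your proposed counterexample fails. With the singularity at $s=0$, the substitution $s=t\sigma$ that you yourself indicate cancels the powers of $t$ exactly and leaves $\int_0^1(1-\sigma)^{1/p-1}\sigma^{-1/p}\big(\log(e/(t\sigma))\big)^{-\beta}\,d\sigma$; since membership $f\in L^p$ forces $\beta>1/p>0$ and $\log\big(e/(t\sigma)\big)\geq\log(e/t)$ for $\sigma\in(0,1)$, this is at most $B(1/p,1-1/p)\big(\log(e/t)\big)^{-\beta}\rightarrow 0$ as $t\rightarrow0^+$ (here $B$ is the Beta integral). Away from $t=0$ the integral is plainly locally bounded, so $J_{0,t}^{1/p}f\in L^\infty(0,1;X)$: no choice of $\beta$ in your family works, because the sign of the exponent needed for $f\in L^p$ is exactly the sign that kills the blow-up. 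The paper avoids examples altogether and proves $J_{t_0,t}^{1/p}\big(L^p\big)\not\subset L^\infty$ by adapting Hardy--Littlewood's contradiction argument \cite[p. 578]{HaLi1}; indeed, Section 6 of the paper explicitly records the construction of an explicit function as a problem the authors could not solve.

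That said, your idea can be repaired by relocating the singularity so that it is approached by $t$ \emph{from the left}, where it collides with the kernel singularity at $s=t$: for $c\in(t_0,t_1)$, $x\in X\setminus\{0\}$ and $1/p<\beta\leq1$, take $f(s)=(c-s)^{-1/p}\big(\log(e/(c-s))\big)^{-\beta}\chi_{(t_0,c)}(s)\,x$. Then $\|f\|_{L^p}^p$ reduces, via $u=\log(e/(c-s))$, to $\int^\infty u^{-\beta p}\,du<\infty$ since $\beta p>1$, while for $t<c$, writing $\epsilon=c-t$ and using $(t-s)^{1/p-1}\geq(c-s)^{1/p-1}$ on $s\leq c-2\epsilon$ (the exponent is negative), one gets
\begin{equation*}
\Gamma(1/p)\,\big\|J_{t_0,t}^{1/p}f(t)\big\|_X\;\geq\;\|x\|_X\int_0^{c-2\epsilon}(c-s)^{-1}\big(\log(e/(c-s))\big)^{-\beta}\,ds\;=\;\|x\|_X\int_{\log(e/c)}^{\log(e/(2\epsilon))}u^{-\beta}\,du,
\end{equation*}
which tends to $\infty$ as $t\uparrow c$ precisely because $\beta\leq1$; hence $J_{t_0,t}^{1/p}f$ exceeds any level on a set of positive measure and is not in $L^\infty(t_0,t_1;X)$. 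This is the one-sided analogue of the classical critical counterexample for the Riesz potential, and the structural point your original choice missed is that for the Riemann--Liouville kernel the singularity of $f$ at $s=0$ is seen only through the tame factor $t^{1/p-1}$, whereas a left-sided singularity at $c$ is seen through the full kernel singularity. Incidentally, if this computation withstands scrutiny, it would also speak to the open problem stated in Section 6 of the paper.
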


\begin{proof} Let $f\in L^p(t_0, t_1; X)$. We observe that \cite[{item $(iii)$ of Remark 4}]{CarFe1} ensures
\begin{equation}\label{neweq1}\left\|J^{1/p}_{t_0,t}f\right\|_{L^q(t_0,t_1;X)}\leq p^{1/q}(t_1-t_0)^{1/pq}\left[J^{1/p}_{t_0,t}f\right]_{L_w^{pq/(p-1)}(t_0,t_1;X)},\end{equation}
where ${L_w^{pq/(p-1)}(t_0,t_1;X)}$ denotes the weak Lebesgue space (see \cite[{item $(iv)$ of Remark 4}]{CarFe1} for details). Also note that \cite[{Corollary 1}]{CarFe1} ensures that
\begin{multline}\label{neweq2}\left[J^{1/p}_{t_0,t}f\right]_{L_w^{pq/(p-1)}(t_0,t_1;X)}\\\leq \left[\dfrac{2p^{(p-1)/(p+q-1)}(q-1)^{(q-1)(1-(1/p))/(p+q-1)}}{\Gamma(1/p)}\right]\|f\|_{L^{pq/(p+q-1)}(t_0,t_1;X)}.\end{multline}
Hence, \eqref{neweq1}, \eqref{neweq2}, and the continuous inclusion of $L^{p}(t_0,t_1;X)$ in $L^{pq/(p+q-1)}(t_0,t_1;X)$ show that
 \eqref{neweq0-1} holds.

The proof that $J_{t_0,t}^{1/p}$ does not maps the space $L^{p}(t_0,t_1;X)$ into $L^\infty(t_0,t_1;X)$ is an adaptation of an argument by Hardy and Littlewood; see \cite[p. 578]{HaLi1} for details.
\end{proof}

The last theorem demonstrated that the RL fractional integral of order $1/p$ maps $L^p(t_0,t_1;X)$ into $\cap_{r\geq1}L^r(t_0,t_1;X)$ but not into $L^\infty(t_0,t_1;X)$. This leads us to inquire about a Banach space strictly between $L^\infty(t_0,t_1;X)$ and $\cap_{r\geq1}L^r(t_0,t_1;X)$ and where we can establish that the RL fractional integral of order $1/p$ indeed maps $L^p(t_0,t_1;X)$ into this space.

\begin{theorem}\label{final011} For $p>1$, it holds that $J^{1/p}_{t_0,t}:L^p(t_0, t_1; X)\rightarrow BMO(t_0, t_1; X)$ and that
$$[J_{t_0,t}^{1/p}f]_{BMO(t_0, t_1; X)}\leq\dfrac{4}{\Gamma\big((1/p)+1\big)}\|f\|_{L^p(t_0, t_1; X)},$$
for every $f\in L^p(t_0, t_1; X)$.
\end{theorem}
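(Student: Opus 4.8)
The plan is to estimate directly the mean oscillation of $g:=J_{t_0,t}^{1/p}f$ over an arbitrary interval $[a,b]\subset(t_0,t_1)$; note first that Theorem \ref{HL01} already guarantees $g\in\bigcap_{r\geq1}L^r(t_0,t_1;X)\subset L^1_{loc}(t_0,t_1;X)$, so the seminorm is well defined. For $t\in[a,b]$ I would split the defining integral at the left endpoint $a$, writing $g=g_1+g_2$ with
$$g_1(t):=\frac{1}{\Gamma(1/p)}\int_{t_0}^{a}(t-s)^{1/p-1}f(s)\,ds,\qquad g_2(t):=\frac{1}{\Gamma(1/p)}\int_{a}^{t}(t-s)^{1/p-1}f(s)\,ds.$$
Since the average is linear, $g(t)-\operatorname{avg}_{[a,b]}(g)=[g_1(t)-\operatorname{avg}_{[a,b]}(g_1)]+[g_2(t)-\operatorname{avg}_{[a,b]}(g_2)]$, so it suffices to bound the oscillation of the local part $g_2$ and of the tail part $g_1$ separately.

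For the local part I observe that $g_2(t)=J_{a,t}^{1/p}f(t)$, so item $(i)$ of Remark \ref{remarkfracriint}, applied on $[a,b]$ with order $1/p$ and Lebesgue exponent $1$, gives $\|g_2\|_{L^1(a,b;X)}\leq[(b-a)^{1/p}/\Gamma(1/p+1)]\|f\|_{L^1(a,b;X)}$; a Hölder step $\|f\|_{L^1(a,b;X)}\leq(b-a)^{1/p'}\|f\|_{L^p(a,b;X)}$, with $p':=p/(p-1)$, then yields $\frac{1}{b-a}\int_a^b\|g_2\|_X\,dt\leq\Gamma(1/p+1)^{-1}\|f\|_{L^p(t_0,t_1;X)}$. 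Using the elementary bound $\frac{1}{b-a}\int_a^b\|g_2-\operatorname{avg}_{[a,b]}(g_2)\|_X\,dt\leq\frac{2}{b-a}\int_a^b\|g_2\|_X\,dt$, the local contribution is at most $2\,\Gamma(1/p+1)^{-1}\|f\|_{L^p}$.

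The tail part is the main obstacle, because the kernel sits exactly at the borderline: raising $(t-s)^{1/p-1}$ to the conjugate power $p'$ produces $(t-s)^{-1}$, which is not integrable, so a direct Hölder estimate of $g_1$ diverges as $t\to a^+$. To circumvent this I would bound the oscillation through the symmetric double integral $\frac{1}{(b-a)^2}\int_a^b\int_a^b\|g_1(t)-g_1(\tau)\|_X\,d\tau\,dt$. For $\tau<t$ the difference $g_1(t)-g_1(\tau)$ involves $\int_{t_0}^{a}[(\tau-s)^{1/p-1}-(t-s)^{1/p-1}]f(s)\,ds$; applying Hölder together with the superadditivity inequality $(A-B)^{p'}\leq A^{p'}-B^{p'}$, valid for $A\geq B\geq0$, collapses the $L^{p'}$-norm of the kernel difference to $\int_{t_0}^a[(\tau-s)^{-1}-(t-s)^{-1}]\,ds\leq\log\frac{t-a}{\tau-a}$, which is only logarithmically singular and hence integrable. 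Carrying out the remaining double integral by the substitution $y=xe^{-u}$, which reproduces Euler's integral, evaluates the geometric factor to $\Gamma(1/p'+1)$, so the tail contribution is at most $\Gamma(1/p'+1)\,\Gamma(1/p)^{-1}\|f\|_{L^p}\leq\Gamma(1/p)^{-1}\|f\|_{L^p}$, using $\Gamma(1/p'+1)\leq1$ since $1/p'\in(0,1)$.

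Finally I would combine the two estimates and simplify with the identity $\Gamma(1/p)=p\,\Gamma(1/p+1)$, obtaining the bound $[(1+2p)/(p\,\Gamma(1/p+1))]\|f\|_{L^p}$; since $1+2p\leq4p$ for every $p>1$, this is dominated by $4\,\Gamma(1/p+1)^{-1}\|f\|_{L^p}$, which is precisely the claimed inequality. I expect the only genuinely delicate point to be the tail estimate, specifically the passage from the non-integrable $p'$-power of the kernel to the integrable logarithm via superadditivity; the local estimate and the final constant bookkeeping are routine.
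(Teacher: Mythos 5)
Your proof is correct, and it takes a genuinely different --- and in fact necessary --- route at the decisive point. The paper also passes through the symmetric double integral and splits at $a$ into a tail $\mathcal{I}$ and a local part $\mathcal{J}$ (cf.\ \eqref{auxmul001}--\eqref{auxmul002}), and your treatment of $g_2(t)=J^{1/p}_{a,t}f(t)$ via item $(i)$ of Remark \ref{remarkfracriint} plus H\"older coincides with its estimate of $\mathcal{J}$. But the paper discards all cancellation, bounding the oscillation by $\tfrac{2}{b-a}\int_a^b\|J^{1/p}_{t_0,t}f(t)\|_X\,dt$ and then estimating the tail by \emph{size}: its chain for $\mathcal{I}$ ends in $\|f\|_{L^1(a,b;X)}$ even though the variable $s$ there ranges over $(t_0,a)$, and with the correct interval the H\"older step only gives $\mathcal{I}\leq (b-a)^{1/p}(a-t_0)^{1/p'}\Gamma((1/p)+1)^{-1}\|f\|_{L^p}$, which after division by $b-a$ is not uniform in $[a,b]$. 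No size estimate of that form can work: a uniform bound $\tfrac{1}{b-a}\int_a^b\|J^{1/p}_{t_0,t}f(t)\|_X\,dt\leq C\|f\|_{L^p}$ over all subintervals would, by Lebesgue differentiation, force $J^{1/p}_{t_0,t}f\in L^\infty(t_0,t_1;X)$, contradicting the last assertion of Theorem \ref{HL01}. Your tail argument supplies exactly what is missing: the difference $g_1(t)-g_1(\tau)$, H\"older, the superadditivity inequality $(A-B)^{p'}\leq A^{p'}-B^{p'}$ collapsing the kernel difference to $(\tau-s)^{-1}-(t-s)^{-1}$ (since $((1/p)-1)p'=-1$), the bound by $\log\big((t-a)/(\tau-a)\big)$ (the exact value being $\log\big[(\tau-t_0)(t-a)/((\tau-a)(t-t_0))\big]$, which you correctly dominate), and the substitution yielding $\Gamma(1+(1/p'))\leq1$ are all valid; the bookkeeping $(2+(1/p))\Gamma((1/p)+1)^{-1}\leq 4\,\Gamma((1/p)+1)^{-1}$ then recovers, indeed slightly improves, the stated constant. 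The only cosmetic caveat is that $g_1(a)$ need not be defined, but all your estimates concern $a<\tau<t\leq b$, a set of full measure, so nothing is lost.
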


\begin{proof} Let $[a,b]\subset(t_0,t_1)$ and notice that
\begin{multline}\label{auxmul001}\left[\dfrac{1}{b-a}\right]\int_a^b\left\|J_{t_0,t}^{1/p}f(t)-\operatorname{avg}_{[a,b]}(J_{t_0,s}^{1/p}f)\right\|_X\,dt
\\\leq\left[\dfrac{1}{(b-a)^2}\right]\int_a^b\int_a^b\left\|J_{t_0,t}^{1/p}f(t)-J_{t_0,s}^{1/p}f(s)\right\|_X\,ds\,dt
\leq\left[\dfrac{2}{(b-a)}\right]\int_a^b\left\|J_{t_0,t}^{1/p}f(t)\right\|_X\,dt.\end{multline}

Now observe that
\begin{multline}\label{auxmul002}\int_a^b\left\|\dfrac{1}{\Gamma(1/p)}\int_{t_0}^t(t-s)^{(1/p)-1}f(s)\,ds\right\|_X\,dt\\\leq
\int_a^b\left\|\dfrac{1}{\Gamma(1/p)}\int_{t_0}^a(t-s)^{(1/p)-1}f(s)\,ds\right\|_X\,dt\\+\int_a^b\left\|\dfrac{1}{\Gamma(1/p)}\int_{a}^t(t-s)^{(1/p)-1}f(s)\,ds\right\|_X\,dt=:\mathcal{I}+\mathcal{J}.
\end{multline}

It is not difficult to see that Fubini-Tonelli Theorem, H\"{o}lder's inequality and the continuous inclusion of $L^{p}(t_0,t_1;X)$ into $L^1(t_0,t_1;X)$ ensure that
\begin{multline*}\mathcal{I}\leq\dfrac{1}{\Gamma(1/p)}\int_{t_0}^{a}\left[\int_a^b(t-s)^{(1/p)-1}\,dt\right]\|f(s)\|_X\,ds\\
=\dfrac{1}{\Gamma((1/p)+1)}\int_{t_0}^{a}\left[(b-s)^{(1/p)}-(a-s)^{(1/p)}\right]\|f(s)\|_X\,ds\\\leq
\dfrac{(b-a)^{1/p}}{\Gamma((1/p)+1)}\|f\|_{L^1(a,b;X)}\leq\dfrac{(b-a)}{\Gamma((1/p)+1)}\|f\|_{L^p(a,b;X)}.\end{multline*}

On the other hand, it follows from item $(i)$ of Remark \ref{remarkfracriint} and the continuous inclusion of $L^{p}(t_0,t_1;X)$ into $L^1(t_0,t_1;X)$ that
$$\mathcal{J}\leq\dfrac{(b-a)^{1/p}}{\Gamma((1/p)+1)}\|f\|_{L^1(a,b;X)}\leq\dfrac{(b-a)}{\Gamma((1/p)+1)}\|f\|_{L^p(a,b;X)}.$$

Therefore, \eqref{auxmul001} and \eqref{auxmul002} allow us to deduce that
$$\dfrac{1}{b-a}\int_a^b\left\|J_{t_0,t}^{1/p}f(t)-\operatorname{avg}_{[a,b]}(J_{t_0,s}^{1/p}f)\right\|_X\,dt
\\\leq\dfrac{4}{\Gamma\big((1/p)+1\big)}\|f\|_{L^p(t_0,t_1; X)},$$
as we wanted.

\end{proof}

\begin{theorem}\label{final022} Let $p>1$ and $\gamma\geq(p-1)/p$. Then $J^{1/p}_{t_0,t}:L^p(t_0, t_1; X)\rightarrow K_\gamma(t_0, t_1; X)$ is a bounded linear operator, i.e., there exists $M>0$ such that
$$\|J_{t_0,t}^{1/p}f\|_{K_\gamma(t_0, t_1; X)}\leq M\|f\|_{L^p(t_0, t_1; X)},$$
for every $f\in L^p(t_0, t_1; X)$.
\end{theorem}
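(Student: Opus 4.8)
The plan is to exploit Theorem \ref{HL01} directly, evaluating the estimate \eqref{neweq0-1} at $q=r$ for every $r\geq 1$. Since the $K_\gamma$-norm is the supremum over $r\geq 1$ of $r^{-\gamma}\|\cdot\|_{L^r(t_0,t_1;X)}$, and since Theorem \ref{HL01} already guarantees $J_{t_0,t}^{1/p}f\in\bigcap_{r\geq 1}L^r(t_0,t_1;X)$ (each $L^r$-norm being finite), the whole statement reduces to proving that the scalar quantity
$$M:=\sup_{r\geq 1}\, r^{-\gamma}C(p,r)(t_1-t_0)^{1/r}$$
is finite. Indeed, once $M<\infty$, combining it with \eqref{neweq0-1} gives $r^{-\gamma}\|J_{t_0,t}^{1/p}f\|_{L^r(t_0,t_1;X)}\leq M\|f\|_{L^p(t_0,t_1;X)}$ for every $r\geq 1$, and taking the supremum over $r$ yields exactly the desired operator bound with constant $M$.

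First I would record that the function $g(r):=r^{-\gamma}C(p,r)(t_1-t_0)^{1/r}$ is continuous on $[1,\infty)$. The only delicate point is the factor $(r-1)^{B(r)}$ with $B(r):=(r-1)(1-(1/p))/(p+r-1)$, which is of the indeterminate form $0^0$ at $r=1$; writing $(r-1)^{B(r)}=e^{B(r)\log(r-1)}$ and noting that $B(r)\log(r-1)\to 0$ as $r\to 1^+$ (because $B(r)$ vanishes linearly in $r-1$ while $\log(r-1)$ diverges only logarithmically) shows that $g$ extends continuously to $r=1$. Hence $g$ is bounded on every compact subinterval, and the finiteness of $M$ is entirely an issue of the behavior of $g$ as $r\to\infty$.

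The core of the argument is therefore the asymptotic analysis of $g(r)$ for large $r$. As $r\to\infty$ the exponent $(p+(p/r)-(1/r))/(p+r-1)$ tends to $0$, so the power of $p$ appearing in $C(p,r)$ converges to $1$; likewise $(t_1-t_0)^{1/r}\to 1$. The decisive factor is $(r-1)^{B(r)}$, and here the key computation is that
$$B(r)=\frac{p-1}{p}-\frac{p-1}{r-1+p}\longrightarrow\frac{p-1}{p},$$
so that $(r-1)^{B(r)}\sim (r-1)^{(p-1)/p}\sim r^{(p-1)/p}$. Consequently
$$g(r)\sim \frac{2}{\Gamma(1/p)}\, r^{-\gamma+(p-1)/p},$$
which stays bounded (indeed converges) as $r\to\infty$ precisely when $-\gamma+(p-1)/p\leq 0$, that is, when $\gamma\geq(p-1)/p$. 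This is exactly the hypothesis, and it is the step where the threshold $(p-1)/p$ is forced.

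The main obstacle I anticipate is making this asymptotic rigorous rather than heuristic: one must control the lower-order correction $-\,(p-1)\log(r-1)/(r-1+p)$ in $B(r)\log(r-1)$ (which tends to $0$, contributing a factor converging to $1$) and confirm that the convergence of the $p$-power and of $(t_1-t_0)^{1/r}$ to $1$ does not spoil boundedness. Once these limits are justified, $g$ is continuous on $[1,\infty)$ and possesses a finite limit at infinity, hence is bounded; this gives $M<\infty$ and completes the proof.
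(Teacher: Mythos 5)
Your proposal is correct and follows essentially the same route as the paper: both evaluate the explicit constant $C(p,r)$ from Theorem \ref{HL01} at $q=r$ and reduce the claim to boundedness of $r\mapsto r^{-\gamma}C(p,r)(t_1-t_0)^{1/r}$, resolving the $0^0$ indeterminacy of $(r-1)^{B(r)}$ at $r=1$ and the growth $(r-1)^{B(r)}\sim r^{(p-1)/p}$ at infinity exactly as the paper does with its auxiliary function $h$ (the paper merely treats $\gamma=(p-1)/p$ first and invokes Theorem \ref{incluaux001} for larger $\gamma$, while you handle all $\gamma\geq(p-1)/p$ at once). If anything, your asymptotics are slightly more precise than the paper's, which asserts $\lim_{t\to\infty}h(t)=0$ when the limit is in fact $1$ (consistent with your finding that the supremand tends to $2/\Gamma(1/p)$ at $\gamma=(p-1)/p$) --- harmless in either write-up, since only boundedness of the function is used.
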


\begin{proof} Consider $h:(1,\infty)\rightarrow\mathbb{R}$ given by
$$h(t)=\dfrac{e^{[(1-(1/p))(t-1)/(p+t-1)]\log(t-1)}}{t^{(p-1)/p}},$$
and observe that $h(t)$ is continuous and that
$$\lim_{t\rightarrow1^+}h(t)=1\qquad\textrm{and}\qquad \lim_{t\rightarrow\infty}h(t)=0.$$
Therefore $h(t)$ is bounded in $(1,\infty)$. Hence, since for $f\in L^p(t_0, t_1; X)$, it follows from from \eqref{neweq0-1} and \eqref{neweq00} that
\begin{multline*}r^{-{(p-1)/p}}\|J_{t_0,t}^{1/p}f\|_{L^r(t_0, t_1; X)}\\\leq2p^{(p+(p/r)-(1/r))/(p+r-1)}\left(\dfrac{(t_1-t_0)^{1/r}}{\Gamma(1/p)}\right)h(r)\|f\|_{L^p(t_0, t_1; X)},\end{multline*}
for any $r\geq1$, we deduce the existence of $M\geq0$ such that
$$\|J_{t_0,t}^{1/p}f\|_{K_{(p-1)/p}(t_0, t_1; X)}\leq M\|f\|_{L^p(t_0, t_1; X)},$$
proving the continuity of the RL fractional integral of order $1/p$ from $L^p(t_0, t_1; X)$ into $K_{(p-1)/p}(t_0, t_1; X)$. To obtain the general case we just apply Theorem \ref{incluaux001}.

\end{proof}

Our final and main result can now be stated as a direct consequence of Theorems \ref{final011} and \ref{final022}.

\begin{theorem} Let $p>1$ and $\gamma\geq(p-1)/p$. Then $$J^{1/p}_{t_0,t}:L^p(t_0, t_1; X)\rightarrow BMO(t_0,t_1;X)\cap K_\gamma(t_0, t_1; X)$$ is a bounded linear operator, i.e., there exists $K>0$ such that
$$\|J_{t_0,t}^{1/p}f\|_{K_\gamma(t_0, t_1; X)}\leq K\|f\|_{\gamma},$$
for every $f\in L^p(t_0, t_1; X)$.
\end{theorem}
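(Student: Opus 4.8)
The plan is to read the statement off as an immediate consequence of Theorems \ref{final011} and \ref{final022}: together they already bound both the $BMO$ seminorm and the $K_\gamma$ norm of $J_{t_0,t}^{1/p}f$ by $\|f\|_{L^p(t_0,t_1;X)}$, so the only remaining task is to assemble these two estimates into a single bound for the norm $\|\cdot\|_\gamma=\|\cdot\|_{BMO(t_0,t_1;X)}+\|\cdot\|_{K_\gamma(t_0,t_1;X)}$ of the intersection space. (Here I read the intended conclusion as $\|J_{t_0,t}^{1/p}f\|_\gamma\leq K\|f\|_{L^p(t_0,t_1;X)}$, with the norm of the image taken in $BMO\cap K_\gamma$ and the norm of $f$ taken in $L^p$.)

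Concretely, I would fix $p>1$, $\gamma\geq(p-1)/p$ and $f\in L^p(t_0,t_1;X)$. Theorem \ref{final011} guarantees $J_{t_0,t}^{1/p}f\in BMO(t_0,t_1;X)$ with $[J_{t_0,t}^{1/p}f]_{BMO(t_0,t_1;X)}\leq \frac{4}{\Gamma((1/p)+1)}\|f\|_{L^p(t_0,t_1;X)}$, while Theorem \ref{final022} guarantees $J_{t_0,t}^{1/p}f\in K_\gamma(t_0,t_1;X)$ with $\|J_{t_0,t}^{1/p}f\|_{K_\gamma(t_0,t_1;X)}\leq M\|f\|_{L^p(t_0,t_1;X)}$ for the constant $M$ produced there. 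Membership in both spaces gives membership in the intersection, and adding the two inequalities yields
$$\|J_{t_0,t}^{1/p}f\|_\gamma=[J_{t_0,t}^{1/p}f]_{BMO(t_0,t_1;X)}+\|J_{t_0,t}^{1/p}f\|_{K_\gamma(t_0,t_1;X)}\leq K\|f\|_{L^p(t_0,t_1;X)},$$
with $K:=\frac{4}{\Gamma((1/p)+1)}+M$. Since the linearity of $J_{t_0,t}^{1/p}$ is already recorded in Remark \ref{remarkfracriint}, this exhibits $J_{t_0,t}^{1/p}$ as the claimed bounded linear operator.

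There is essentially no analytic obstacle here: all the content lives in the two theorems being combined, and the argument is pure bookkeeping. The only points I would check carefully are two. First, I would confirm that the term $\|\cdot\|_{BMO(t_0,t_1;X)}$ appearing in $\|\cdot\|_\gamma$ is precisely the $BMO$ seminorm $[\cdot]_{BMO(t_0,t_1;X)}$ controlled in Theorem \ref{final011}, so that the two estimates line up term by term. Second, I would note a range discrepancy: the admissible $\gamma\geq(p-1)/p$ may include values $\gamma\geq1$, whereas completeness of $BMO(t_0,t_1;X)\cap K_\gamma(t_0,t_1;X)$ under $\|\cdot\|_\gamma$ was stated only for $0<\gamma<1$. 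The estimate itself requires no completeness and therefore holds on the full range; and since $0<(p-1)/p<1$ for every $p>1$, one may either restrict to $\gamma\in[(p-1)/p,1)$ to land directly in the Banach space, or invoke the continuous inclusion of Theorem \ref{incluaux001} to pass from a smaller to a larger $K_\gamma$ without changing the conclusion.
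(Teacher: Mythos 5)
Your proposal is correct and matches the paper exactly: the paper states this theorem as a direct consequence of Theorems \ref{final011} and \ref{final022} with no further argument, which is precisely the bookkeeping you carry out (including your sensible reading of the displayed inequality as $\|J_{t_0,t}^{1/p}f\|_\gamma\leq K\|f\|_{L^p(t_0,t_1;X)}$, since the statement's norm subscripts are evidently a typo). Your two side remarks --- matching $[\cdot]_{BMO}$ with the $\|\cdot\|_{BMO}$ term in $\|\cdot\|_\gamma$, and the completeness of the intersection being stated only for $0<\gamma<1$ --- are careful observations the paper itself glosses over.
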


\section{Open Problems}

An intriguing question that arose during the proof of Theorem \ref{HL01} is the task of identifying a function in $L^p(t_0,t_1;X)$ for which the RL fractional integral of order $1/p$ becomes unbounded. Throughout this study, we have come to appreciate the complexity of this problem, recognizing that our efforts thus far have not produced a solution. However, this is not unexpected, as even Hardy and Littlewood, in their pioneering study \cite{HaLi1}, did not provide such an example. Instead of presenting an explicit example, they resorted to a proof by contradiction to affirmatively address this question. This approach, in some sense, underscores the considerable difficulty in discovering such a function.

Another unresolved issue we encountered pertains to proving that, for any $\gamma\in[0,(p-1)/p)$, it holds that $J_{t_0,t}^{1/p}\big(L^p(t_0,t_1;X)\big)\not\subset K_\gamma(t_0,t_1;X)$. Successfully addressing this would complete our study. However, this problem hinges on the existence of the elusive function mentioned earlier, as it requires a detailed analysis of how the $L^r$ norm of a function changes with varying $r\geq1$, and eventually tends to infinity faster then $r^\gamma$. We acknowledge that the resolution of this particular problem remains elusive.

\section*{Acknowledgement}
 The authors express their gratitude to the Federal University of Esp\'{\i}rito Santo and the Federal University of Santa Catarina for their hospitality and support during short-term visits. The second author was supported by CAPES/PRAPG grant nº 88881.964878/2024-01 and Funda\c{c}\~{a}o de Amparo \`{a} Pesquisa e Inova\c{c}\~{a}o do Esp\'{\i}rito Santo (Fapes) under grant numbers T.O. 427/2023 and T.O. 951/2023.

\end{document}